\newcommand{\authortitle}[2]{\author{#1}\title{#2}\markboth{#1}{#2}}
\newtheoremstyle{mydefinitionstyle}
	{\topsep}		% Space above
	{\topsep}		% Space below
	{\rmfamily}		% Body font
	{}				% Indent amount
	{\bfseries}		% Head font
	{.}				% Punctuation after theorem head
	{ }				% Space after theorem head
	{}				% Theorem head spec(?)
\newtheoremstyle{mytheoremstyle}
	{\topsep}		% Space above
	{\topsep}		% Space below
	{\itshape}		% Body font
	{}				% Indent amount
	{\bfseries}		% Theorem head font
	{.}				% Punctuation after theorem head
	{ }				% Space after theorem head
	{}				% Theorem head spec (can be left empty)
\theoremstyle{mytheoremstyle}
\newtheorem{theorem}{Theorem}[section]
\newtheorem{lemma}[theorem]{Lemma}
\newtheorem{proposition}[theorem]{Proposition}
\newtheorem{corollary}[theorem]{Corollary}
\theoremstyle{mydefinitionstyle}
\newtheorem{definition}[theorem]{Definition}
\newtheorem{example}[theorem]{Example}
\newtheorem{remark}[theorem]{Remark}
\numberwithin{equation}{section}
\renewenvironment{proof}[1][\proofname]{%
	\par%
	\pushQED{\qed}%
	\normalfont%
	\trivlist%
	\item[\hskip\labelsep\itshape#1\@addpunct{.}]%
	\ignorespaces%
}{%
	\popQED\endtrivlist\@endpefalse%
}
\def\sectionmark#1{}
\def\subsectionmark#1{}
\newcommand{\sectnr}{%
	\ifnum\c@secnumdepth>\z@%
		\thesection.\hskip 1em\relax \fi%
}
\def\@evenhead{\footnotesize\rm\thepage\hfil\leftmark\hfil}
\def\@oddhead{\footnotesize\rm\hfil\rightmark\hfil\thepage}
\def\tableofcontents{\section*{Contents}\@starttoc{toc}}
\def\@seccntformat#1{\csname the#1\endcsname.\quad}
\let\Enumerate=\enumerate
\let\Endenumerate=\endenumerate
\renewenvironment{enumerate}{%
	\vspace{-3pt}
	\Enumerate%
	\setlength{\itemsep}{0pt}%
	\setlength{\parskip}{0pt}%
}{%
	\Endenumerate\unskip\vspace{4pt}%
}
\def\mdots@{\mathinner.\nonscript\!.%
 \ifx\next,.\else\ifx\next;.\else\ifx\next..\else
 \nonscript\!\mathinner.\fi\fi\fi}
\let\ldots\mdots@
\let\cdots\mdots@
\let\dotso\mdots@
\let\dotsb\mdots@
\let\dotsm\mdots@
\let\dotsc\mdots@
\def\vdots{\vbox{\baselineskip2.8\p@ \lineskiplimit\z@
    \kern6\p@\hbox{.}\hbox{.}\hbox{.}\kern3\p@}}
\def\ddots{\mathinner{\mkern1mu\raise8.6\p@\vbox{\kern7\p@\hbox{.}}%
    \raise5.8\p@\hbox{.}\raise3\p@\hbox{.}\mkern1mu}}
\def\cdots{\mathinner{\mkern1mu{\cdot}{\cdot}{\cdot}\mkern1mu}}
\gdef\eeaa#1pt{#1}}
\def\accentadjtext#1{\setbox0\hbox{$#1$}\kern
                \expandafter\eeaa\the\fontdimen1\textfont1 \ht0 }
\def\accentadjscript#1{\setbox0\hbox{$#1$}\kern
                \expandafter\eeaa\the\fontdimen1\scriptfont1 \ht0 }
\def\accentadjscriptscript#1{\setbox0\hbox{$#1$}\kern
                \expandafter\eeaa\the\fontdimen1\scriptscriptfont1 \ht0 }
\def\accentadjtextback#1{\setbox0\hbox{$#1$}\kern
                -\expandafter\eeaa\the\fontdimen1\textfont1 \ht0 }
\def\accentadjscriptback#1{\setbox0\hbox{$#1$}\kern
                -\expandafter\eeaa\the\fontdimen1\scriptfont1 \ht0 }
\def\accentadjscriptscriptback#1{\setbox0\hbox{$#1$}\kern
                -\expandafter\eeaa\the\fontdimen1\scriptscriptfont1 \ht0 }
\def\itoverline#1{{\mathsurround0pt\mathchoice
        {\rlap{$\accentadjtext{\displaystyle #1}
                \accentadjtext{\vrule height1.593pt}
                \overline{\phantom{\displaystyle #1}
                \accentadjtextback{\displaystyle #1}}$}{#1}}
        {\rlap{$\accentadjtext{\textstyle #1}
                \accentadjtext{\vrule height1.593pt}
                \overline{\phantom{\textstyle #1}
                \accentadjtextback{\textstyle #1}}$}{#1}}
        {\rlap{$\accentadjscript{\scriptstyle #1}
                \accentadjscript{\vrule height1.593pt}
                \overline{\phantom{\scriptstyle #1}
                \accentadjscriptback{\scriptstyle #1}}$}{#1}}
        {\rlap{$\accentadjscriptscript{\scriptscriptstyle #1}
                \accentadjscriptscript{\vrule height1.593pt}
                \overline{\phantom{\scriptscriptstyle #1}
                \accentadjscriptscriptback{\scriptscriptstyle #1}}$}{#1}}}}
\def\itunderline#1{{\mathsurround0pt\mathchoice
        {\rlap{$\underline{\phantom{\displaystyle #1}
                \accentadjtextback{\displaystyle #1}}$}{#1}}
        {\rlap{$\underline{\phantom{\textstyle #1}
                \accentadjtextback{\textstyle #1}}$}{#1}}
        {\rlap{$\underline{\phantom{\scriptstyle #1}
                \accentadjscriptback{\scriptstyle #1}}$}{#1}}
        {\rlap{$\underline{\phantom{\scriptscriptstyle #1}
                \accentadjscriptscriptback{\scriptscriptstyle #1}}$}{#1}}}}
\def\vint{\mathop{\mathchoice%
          {\setbox0\hbox{$\displaystyle\intop$}\kern 0.22\wd0
           \vcenter{\hrule width 0.6\wd0}\kern -0.82\wd0}
          {\setbox0\hbox{$\textstyle\intop$}\kern 0.2\wd0
           \vcenter{\hrule width 0.6\wd0}\kern -0.8\wd0}
          {\setbox0\hbox{$\scriptstyle\intop$}\kern 0.2\wd0
           \vcenter{\hrule width 0.6\wd0}\kern -0.8\wd0}
          {\setbox0\hbox{$\scriptscriptstyle\intop$}\kern 0.2\wd0
           \vcenter{\hrule width 0.6\wd0}\kern -0.8\wd0}}
          \mathopen{}\int}
\newcommand{\limminus}{{\mathchoice{\raise.17ex\hbox{$\scriptstyle -$}}
                {\raise.17ex\hbox{$\scriptstyle -$}}
                {\raise.1ex\hbox{$\scriptscriptstyle -$}}
                {\scriptscriptstyle -}}}
\newcommand{\limplus}{{\mathchoice{\raise.17ex\hbox{$\scriptstyle +$}}
                {\raise.17ex\hbox{$\scriptstyle +$}}
                {\raise.1ex\hbox{$\scriptscriptstyle +$}}
                {\scriptscriptstyle +}}}
\newcommand{\calK}[0]{\mathscr{K}}
\newcommand{\calL}[0]{\mathscr{L}}
\newcommand{\calM}[0]{\mathscr{M}}
\newcommand{\calU}[0]{\mathscr{U}}
\newcommand{\calV}[0]{\mathscr{V}}
\newcommand{\R}{\mathbb{R}}
\newcommand{\eR}{{\overline{\R\kern-0.08em}\kern 0.08em}} 
\def\cprime{{\mathsurround0pt$'$}}		
\DeclareMathOperator{\Div}{div}
\DeclareMathOperator{\diam}{diam}
\DeclareMathOperator{\dist}{dist}
\DeclareMathOperator{\Lip}{Lip}
\newcommand{\Lipc}{{\Lip_c}}
\DeclareMathOperator*{\essliminf}{ess\,lim\,inf}
\DeclareMathOperator*{\essinf}{ess\,inf}
\newcommand{\setm}{\setminus}
\renewcommand{\emptyset}{\varnothing}
\newcommand{\Cp}{{C_p}}
\newcommand{\bCp}{{\itoverline{C}_p}}
\newcommand{\bCpO}{\bCp(\,\cdot\,;\Omega)}
\newcommand{\pp}{{$p\mspace{1mu}$}}   
\newcommand{\bdy}{\partial}
\newcommand{\loc}{_{\rm loc}}
\newcommand{\clOm}{\overline\Omega}
\newcommand{\clOmX}{\overline\Omega\setm\{\infty\}}
\newcommand{\bdyOmX}{\bdy\Omega\setm\{\infty\}}
\newcommand{\dmu}{d\mu}
\newcommand{\dr}{dr}
\newcommand{\dx}{dx}
\newcommand{\ds}{ds}
\newcommand{\eps}{\varepsilon}
\renewcommand{\phi}{\varphi}
\newcommand{\Lp}{L^{p}}
\newcommand{\Lploc}{L^{p}\loc}
\newcommand{\Dp}{D^p}
\newcommand{\Dploc}{D^{p}\loc}
\newcommand{\Np}{N^{1,p}}
\newcommand{\Nploc}{N^{1,p}\loc}
\newcommand{\K}{\calK}
\newcommand{\Hp}{H}
\newcommand{\U}{\calU}
\newcommand{\LL}{\calL}
\newcommand{\Pp}{P}				% Perron solution (upper=lower)
\newcommand{\uPp}{\itoverline{P}}	% upper Perron solution
\newcommand{\lPp}{\itunderline{P}}	% lower Perron solution
\begin{document}

%  Author and title in headings.
%
\authortitle{Daniel Hansevi}{%
The Perron method for \pp-harmonic functions in unbounded sets in $\R^n$ and metric spaces
}

%  Author and title for \maketitle.	
%
\title{The Perron method for \pp-harmonic functions \\ in unbounded sets in $\R^n$ and metric spaces}
\author{%
	Daniel Hansevi \\
	\it\small Department of Mathematics, Link\"oping University, \\
	\it\small SE-581 83 Link\"oping, Sweden\/{\rm ;}
	\it\small daniel.hansevi@liu.se \\
}

\date{} % Omitting date in title.

\maketitle

\noindent{\small
 {\bf Abstract}.
The Perron method for solving 
the Dirichlet problem for \pp-harmonic functions 
is extended to unbounded open sets 
in the setting of a complete metric space 
with a doubling measure 
supporting a \pp-Poincar\'e inequality, 
$1<p<\infty$. 
The upper and lower (\pp-harmonic) Perron solutions 
are studied for open sets, 
which are assumed to be \pp-parabolic if unbounded. 
It is shown that 
continuous functions and quasicontinuous Dirichlet functions 
are resolutive 
(i.e., that their upper and lower Perron solutions coincide), 
that the Perron solution agrees with 
the \pp-harmonic extension, 
and that Perron solutions are invariant 
under perturbation of the function on a set of capacity zero.
}
\bigskip

\noindent {\small \emph{Key words and phrases}:
Dirichlet problem, 
Dirichlet space, 
doubling measure, 
metric space, 
minimal \pp-weak upper gradient, 
Newtonian space, 
nonlinear potential theory, 
obstacle problem, 
\pp-harmonic, 
\pp-parabolic, 
Perron method,
Poincar\'e inequality, 
quasicontinuity, 
upper gradient.}

\medskip

\noindent {\small Mathematics Subject Classification (2010):
Primary: 31E05; 
Secondary: 31C45, 35D30, 35J20, 35J25, 35J60, 
47J20, 49J40, 49J52, 49Q20, 58J05, 58J32.
}
%===============================================================================

\section{Introduction}
\label{sec:intro}
%===============================================================================
The Dirichlet (boundary value) problem for \pp-harmonic functions, 
$1<p<\infty$, 
which is a nonlinear generalization of the 
classical Dirichlet problem, 
considers the \pp-Laplace equation, 
\begin{equation} \label{sec:intro-p-Laplace-eq}
	\Delta_p u 
	:= \Div(|\nabla u|^{p-2}\nabla u) 
	= 0,
\end{equation}
with prescribed boundary values $u=f$ on the boundary $\bdy\Omega$. 
A continuous weak solution of \eqref{sec:intro-p-Laplace-eq} 
is said to be \emph{\pp-harmonic}. 

The nonlinear potential theory of \pp-harmonic functions 
has been developed since the 1960s; 
not only in $\R^n$, 
but also in weighted $\R^n$, 
Riemannian manifolds, and other settings. 
The books 
Mal\'y--Ziemer~\cite{MaZi97} and 
Heinonen--Kilpel\"ainen--Martio~\cite{HeKiMa06} 
are two thorough treatments in $\R^n$ and weighted $\R^n$, respectively.

More recently, \pp-harmonic functions have been studied in 
complete metric spaces equipped with a doubling measure 
supporting a \pp-Poincar\'e inequality. 
It is not clear how to employ partial differential equations 
in such a general setting as a metric measure space. 
However, the equivalent variational problem of locally minimizing the 
\pp-energy integral, 
\begin{equation} \label{sec:intro-p-enery-eq}
	\int|\nabla u|^p\,\dx, 
\end{equation}
among all admissible functions, 
becomes available when considering 
the notion of minimal \pp-weak upper gradient as a substitute for 
the modulus of the usual gradient. 
A continuous minimizer of \eqref{sec:intro-p-enery-eq} 
is \pp-harmonic. 
The reader might want to consult 
Bj\"orn--Bj\"orn~\cite{BjBj11book} 
for the theory of \pp-harmonic functions 
and first-order analysis on metric spaces.

If the boundary value function $f$ is not continuous, 
then it is not feasible to require that the solution $u$ 
attains the boundary values as limits, 
i.e., 
to require that 
$u(y)\to f(x)$ as $y\to x$ ($y\in\Omega$) for all $x\in\bdy\Omega$. 
This is actually often not possible even if $f$ is continuous 
(see, e.g., Examples~13.3 and 13.4 in Bj\"orn--Bj\"orn~\cite{BjBj11book}). 
It is therefore more reasonable to 
consider boundary data in a weaker (Sobolev) sense. 
Shanmugalingam~\cite{Shanmugalingam01} 
solved the Dirichlet problem for \pp-harmonic functions 
in bounded domains 
with Newtonian boundary data 
taken in Sobolev sense. 
This result was generalized by Hansevi~\cite{Hansevi15} to 
unbounded domains with Dirichlet boundary data.
For continuous boundary values, 
the problem was solved in bounded domains using 
uniform approximation by 
Bj\"orn--Bj\"orn--Shanmugalingam~\cite{BjBjSh03a}.

The Perron method for solving the Dirichlet problem 
for harmonic functions (on $\R^2$) was introduced in 1923 
by Perron~\cite{Perron23} 
(and independently by Remak~\cite{Remak24}). 
The advantage of the method is that one can construct reasonable 
solutions for arbitrary boundary data. 
It provides an upper and a lower solution, 
and the major question is to determine when these solutions coincide, 
i.e., to determine when the boundary data is \emph{resolutive}. 
The Perron method in connection with the usual Laplace operator 
has been studied extensively in Euclidean domains 
(see, e.g., Brelot~\cite{Brelot39} for the 
complete characterization of the resolutive functions) 
and has been extended to degenerate elliptic operators 
(see, e.g., 
Granlund--Lindqvist--Martio~\cite{GrLiMa86}, 
Kilpel\"ainen~\cite{Kilpelainen89}, 
and Heinonen--Kilpel\"ainen--Martio~\cite{HeKiMa06}). 

Bj\"orn--Bj\"orn--Shanmugalingam~\cite{BjBjSh03b} 
extended the Perron method for \pp-harmonic functions to the setting of 
a complete metric space equipped with a doubling measure 
supporting a \pp-Poincar\'e inequality, 
and proved that 
Perron solutions are \pp-harmonic and agree with 
the previously obtained solutions for Newtonian boundary data 
in Shanmugalingam~\cite{Shanmugalingam01}. 
More recently, 
Bj\"orn--Bj\"orn--Shanmugalingam~\cite{BjBjSh13a} have 
developed the Perron method for \pp-harmonic functions 
with respect to the Mazurkiewicz boundary. 
See also 
Estep--Shanmugalingam~\cite{EsSh15}, 
A.~Bj\"orn~\cite{BjornA15}, 
and Bj\"orn--Bj\"orn--Sj\"odin~\cite{BjBjSj16}.

The purpose of this paper is to extend the Perron method 
for solving the Dirichlet problem for \pp-harmonic functions to 
\emph{unbounded} open sets 
in the setting of 
a complete metric space equipped with a doubling measure 
supporting a \pp-Poincar\'e inequality. 
In particular, we show that 
quasicontinuous functions with finite Dirichlet energy, 
as well as continuous functions, 
are resolutive with respect to open sets, 
which are assumed to be \pp-parabolic if unbounded,   
and that the 
Perron solution is the unique \pp-harmonic solution 
that takes the required boundary data 
outside sets of capacity zero. 
We also show that Perron solutions 
are invariant under perturbations on sets of capacity zero. 

The paper is organized as follows: 
In the next section, 
we establish notation, 
review some basic definitions relating to Sobolev-type spaces on metric spaces, 
and obtain a new convergence lemma. 
In Section~\ref{sec:obstacle-problem}, 
we review the obstacle problem associated with \pp-harmonic functions 
in unbounded sets 
and obtain a convergence theorem 
that will be important in the proof of Theorem~\ref{thm:Dp-resolutive} 
(the main result of this paper). 
Section~\ref{sec:p-parabolic} is devoted to \pp-parabolic sets. 
The necessary background on \pp-harmonic and 
superharmonic functions is given in Section~\ref{sec:p-harmonic}, 
making it possible to define Perron solutions in Section~\ref{sec:Perron}, 
where we also extend the comparison principle 
for superharmonic functions to unbounded sets. 
In Section~\ref{sec:resolutivity}, 
we introduce a smaller capacity 
(and its related quasicontinuity property) 
before we obtain our main result 
(Theorem~\ref{thm:Dp-resolutive}) on resolutivity 
(of quasicontinuous functions) 
along with some consequences.
%===============================================================================

\section{Notation and preliminaries}
\label{sec:prel} 
%===============================================================================
We assume throughout the paper that $(X,\calM,\mu,d)$ 
is a metric measure space (which we refer to as $X$) 
equipped with a metric $d$ and a 
positive complete Borel measure $\mu$ such that 
$0<\mu(B)<\infty$ 
for all balls $B\subset X$. 
We use the following notation for balls, 
\[
	B(x_0,r) 
	:= \{x\in X:d(x,x_0)<r\},
\]
and for $B=B(x_0,r)$ and $\lambda>0$, 
we let $\lambda B=B(x_0,\lambda r)$.
The $\sigma$-algebra $\calM$ (on which $\mu$ is defined) 
is the completion of the Borel $\sigma$-algebra. 
Later we will impose additional requirements 
on the space and on the measure. 
We assume further that 
$1<p<\infty$ 
and that\/ $\Omega$ is a nonempty 
\textup{(}possibly unbounded\textup{)} 
open subset of $X$. 

The measure $\mu$ is said to be \emph{doubling} if there exists 
a constant $C\geq 1$ such that 
\[
	0 < \mu(2B) \leq C \mu(B) < \infty 
\]
for all balls $B\subset X$. 
Recall that a metric space is said to be \emph{proper} 
if all bounded closed subsets are compact. 
In particular, 
this is true if the metric space is complete and the measure is doubling. 

The characteristic function 
of a set $E$ is denoted by $\chi_E$, 
and we let $\sup\emptyset=-\infty$ 
and $\inf\emptyset=\infty$. 
We say that the set $E$ is compactly contained in $A$ if $\itoverline{E}$ 
(the closure of $E$) is a compact subset of $A$ 
and denote this by $E\Subset A$. 
The extended real number system is denoted by 
$\eR:=[-\infty, \infty]$. 
We use the notation $f_\limplus=\max\{f, 0\}$ and
$f_\limminus=\max\{-f,0\}$.
Continuous functions will be assumed to be real-valued. 
By a curve in $X$ 
we mean a rectifiable nonconstant continuous mapping 
from a compact interval into $X$. 
A curve can thus be parametrized by its arc length $\ds$. 
%===============================================================================
\begin{definition}\label{def:upper-gradients}
A Borel function $g\colon X\to[0,\infty]$ is said to be an 
\emph{upper gradient} of a function $f\colon X\to\eR$ whenever 
\begin{equation}\label{def:upper-gradients-ineq}
	|f(x)-f(y)|
	\leq \int_\gamma g\,\ds
\end{equation}
holds for each pair of points 
$x,y\in X$ and every curve $\gamma$ in $X$ joining $x$ and $y$. 
We make the convention that the left-hand side is infinite 
when at least one of the terms in the left-hand side is infinite.
\end{definition}
%===============================================================================
A drawback of the upper gradients, 
introduced in 
Heinonen--Koskela~\cite{HeKo96},\cite{HeKo98}, 
is that they are not preserved by $\Lp$-convergence. 
It is, however, possible to overcome this problem 
by relaxing the condition a bit (Koskela--MacManus~\cite{KoMac98}). 
%===============================================================================
\begin{definition}\label{def:p-weak-upper-gradients}
A measurable function $g\colon X\to[0,\infty]$ is said to be a 
\emph{\pp-weak upper gradient} of a function $f\colon X\to\eR$ whenever 
\eqref{def:upper-gradients-ineq} holds for each pair of points 
$x,y\in X$ and \pp-almost every curve (see below) $\gamma$ in $X$ 
joining $x$ and $y$. 
\end{definition}
%===============================================================================
Note that a \pp-weak upper gradient is not required to be a Borel function 
(see the discussion in the notes to Chapter 1 in Bj\"orn--Bj\"orn~\cite{BjBj11book}).

We say that a property holds for \emph{\pp-almost every curve} 
if it fails only for 
a curve family $\Gamma$ with zero \pp-modulus, 
i.e., 
if there exists a nonnegative $\rho\in\Lp(X)$ such that 
$\int_\gamma\rho\,\ds=\infty$ for every curve $\gamma\in\Gamma$. 

A countable union of curve families, each with zero \pp-modulus, 
also has zero \pp-modulus. 
For proofs of this and other results in this section, 
we refer to Bj\"orn--Bj\"orn~\cite{BjBj11book} or 
Heinonen--Koskela--Shanmugalingam--Tyson~\cite{HeKoShTy15}. 

Shanmugalingam~\cite{Shanmugalingam00} 
used upper gradients to 
define so-called Newtonian spaces. 
%===============================================================================
\begin{definition}\label{def:Newtonian-space}
The \emph{Newtonian space} on $X$, 
denoted by $\Np(X)$, 
is the space of all 
everywhere defined, extended real-valued functions $u\in\Lp(X)$ 
such that  
\[
	\|u\|_{\Np(X)} 
	:= \biggl(\int_X|u|^p\,\dmu + \inf_g\int_X g^p\,\dmu\biggr)^{1/p}<\infty, 
\]
where the infimum is taken over all upper gradients $g$ of $u$.
\end{definition}
%===============================================================================
\begin{definition}\label{def:Dirichlet-space}
An everywhere defined, 
measurable, 
extended real-valued function on $X$ 
belongs to the 
\emph{Dirichlet space} $\Dp(X)$ 
if it has an upper gradient in $\Lp(X)$.
\end{definition}
%===============================================================================
It follows from Lemma~2.4 in Koskela--MacManus~\cite{KoMac98} 
that a measurable function belongs to $\Dp(X)$ whenever 
it (merely) has a \pp-weak upper gradient in $\Lp(X)$.

We emphasize that 
Newtonian and Dirichlet functions are defined \emph{everywhere} 
(not just up to an equivalence class in the
corresponding function space), 
which is essential for the notion of upper gradient to make sense. 
Shanmugalingam~\cite{Shanmugalingam00} proved that 
the associated normed (quotient) space defined by 
$\Np(X)/\sim$, where $u\sim v$ 
if and only if $\|u-v\|_{\Np(X)}=0$, 
is a Banach space. 

A measurable set $A\subset X$ can be 
considered to be a metric space in its own right
(with the restriction of $d$ and $\mu$ to $A$). 
Thus the Newtonian space $\Np(A)$ and the Dirichlet space $\Dp(A)$ 
are also given by 
Definitions~\ref{def:Newtonian-space}~and~\ref{def:Dirichlet-space}, 
respectively. 
If $X$ is proper, 
then $f\in\Lploc(\Omega)$, $f\in\Nploc(\Omega)$, and $f\in\Dploc(\Omega)$ 
if and only if 
$f\in\Lp(\Omega')$, $f\in\Np(\Omega')$, and $f\in\Dp(\Omega')$, 
respectively, for all open $\Omega'\Subset\Omega$. 

If $u\in\Dp(X)$, 
then $u$ has a \emph{minimal \pp-weak upper gradient}, 
denoted by $g_u$, which is minimal in the sense that $g_u\leq g$ a.e.\ for 
all \pp-weak upper gradients $g$ of $u$; 
see Shanmugalingam~\cite{Shanmugalingam01}. 
Minimal \pp-weak upper gradients $g_u$ are true substitutes 
for $|\nabla u|$ in metric spaces.
One of the important properties of minimal \pp-weak upper gradients 
is that they are
local in the sense that if two functions 
$u,v\in\Dp(X)$ coincide on a set $E$, 
then $g_u=g_v$ a.e.\ on $E$. 
Furthermore, if $U=\{x\in X:u(x)>v(x)\}$, 
then $g_u\chi_U+g_v\chi_{X\setm U}$ and 
$g_v\chi_U+g_u\chi_{X\setm U}$ 
are minimal \pp-weak upper gradients of $\max\{u,v\}$ 
and $\min\{u,v\}$, respectively. 
The restriction of a minimal \pp-weak upper gradient 
to an open subset remains minimal with respect to that subset, 
and hence the results above about minimal \pp-weak upper gradients 
of functions in $\Dp(X)$  
extend to functions in $\Dploc(X)$ 
having minimal \pp-weak upper gradients in $\Lploc(X)$. 

The notion of capacity of a set is important in potential theory, 
and various types and definitions can be found in the literature 
(see, e.g., Kinnunen--Martio~\cite{KiMa96} and 
Shanmugalingam~\cite{Shanmugalingam00}). 
%===============================================================================
\begin{definition}\label{def:capacity}
Let $A\subset X$ be measurable. 
The (\emph{Sobolev}) \emph{capacity} (with respect to $A$) 
of $E\subset A$ is the number 
\[
	\Cp(E;A) 
	:= \inf_u\|u\|_{\Np(A)}^p,
\]
where the infimum is taken over all 
$u\in\Np(A)$ such that $u\geq 1$ on $E$. 
When the capacity is taken with respect to $X$, 
we simplify the notation and write $\Cp(E)$.

Whenever a property holds for all points 
except for those in a set of capacity zero, 
it is said to hold \emph{quasieverywhere} (\emph{q.e.}). 
\end{definition}
%===============================================================================
The capacity is countably subadditive, 
i.e., 
$\Cp(\bigcup_{j=1}^\infty E_j)\leq\sum_{j=1}^\infty\Cp(E_j)$. 

In order to be able to compare boundary values 
of Dirichlet and Newtonian functions, 
we introduce the following spaces.
%===============================================================================
\begin{definition}\label{def:Dp0}
For subsets $E$ and $A$ of $X$, 
where $A$ is measurable, 
the \emph{Dirichlet space with zero boundary values in $A\setm E$}, 
is 
\[
	\Dp_0(E;A) 
	:= \{u|_{E\cap A}:u\in\Dp(A)\textup{ and }u=0\textup{ in }A\setm E\}.
\]
The \emph{Newtonian space with zero boundary values}, $\Np_0(E;A)$, is defined analogously. 
We let $\Dp_0(E)$ and $\Np_0(E)$ denote 
$\Dp_0(E;X)$ and $\Np_0(E;X)$, respectively.
\end{definition}
%===============================================================================
The condition ``$u=0$ in $A\setm E$'' 
can actually be replaced by ``$u=0$ q.e.\ in $A\setm E$'' 
without changing the obtained spaces. 

If $E\subset X$ is measurable, $f\in\Dp(E)$, 
$f_1,f_2\in\Dp_0(E)$, and $f_1\leq f\leq f_2$ q.e.\ in $E$\textup{,} 
then $f\in\Dp_0(E)$ (this is Lemma~2.8 in Hansevi~\cite{Hansevi15}). 

The following convergence lemma 
will be used to prove Theorem~\ref{thm:obst-lim}, 
which in turn will be important when we prove 
Theorem~\ref{thm:Dp-resolutive}. 
%===============================================================================
\begin{lemma}\label{lem:conv-grad}
Let $G_1,G_2,\dots$ be open sets such that\/ 
$G_1\subset G_2\subset\cdots\subset X=\bigcup_{k=1}^\infty G_k$ 
and let\/ $\{u_j\}_{j=1}^\infty$ be a sequence of functions defined on $X$.  
Assume that\/ $\{u_j\}_{j=1}^\infty$ 
is bounded in $\Lp(G_k)$ 
for all $k=1,2,\dots$\,. 
Assume further that\/ 
$\{g_j\}_{j=1}^\infty$ 
is bounded in $\Lp(X)$\textup{,} 
and that $g_j$ is a \pp-weak upper gradient of $u_j$ 
with respect to $G_j$ for each $j=1,2,\dots$\,. 
Then a function $u$ belongs to $\Dp(X)$ 
if $u_j\to u$ q.e.\ on $X$ as $j\to\infty$.
\end{lemma}
%-------------------------------------------------------------------------------
\begin{proof}
Let $k$ be a positive integer. 
Clearly, 
$g_j$ is a \pp-weak upper gradient of $u_j$ 
with respect to $G_k$ for every integer $j\geq k$. 
According to Lemma~3.2 in 
Bj\"orn--Bj\"orn--Parviainen~\cite{BjBjPa10}, 
there are a \pp-weak upper gradient 
$\tilde g_k\in\Lp(G_k)$ of $u$ with respect to $G_k$  
and a subsequence of $\{g_j\}_{j=1}^\infty$, 
denoted by $\{g_{k,j}\}_{j=1}^\infty$,  
such that $g_{k,j}\to\tilde g_k$ weakly in $\Lp(G_k)$ as $j\to\infty$. 
Extend $\tilde g_k$ to $X$ by letting $\tilde g_k=0$ on $X\setm G_k$. 
Since $\{g_j\}_{j=1}^\infty$ is bounded in $\Lp(X)$, 
there is an integer $M$ such that $\|g_j\|_{\Lp(X)}\leq M$ 
for all $j=1,2,\dots$\,. 
The weak convergence implies that 
\[
	\|\tilde g_k\|_{\Lp(X)} 
	= \|\tilde g_k\|_{\Lp(G_k)} 
	\leq \liminf_{j\to\infty}\|g_{k,j}\|_{\Lp(G_k)} 
	\leq \liminf_{j\to\infty}\|g_{k,j}\|_{\Lp(X)} 
	\leq M,  
\]
and hence the sequence 
$\{\tilde g_k\}_{k=1}^\infty$ is bounded in $\Lp(X)$. 

Since $\Lp(X)$ is reflexive, 
it follows from Banach--Alaoglu's theorem 
that there is a subsequence, 
also denoted by $\{\tilde g_k\}_{k=1}^\infty$, 
that converges weakly in $\Lp(X)$ 
to a function $g$. 
By applying Mazur's lemma (see, e.g., Theorem~3.12 in Rudin~\cite{Rudin91}) 
repeatedly to the 
sequences $\{\tilde g_k\}_{k=j}^\infty$, $j=1,2,\dots$\,, 
we can find convex combinations 
\[
	g'_j 
	= \sum_{k=j}^{N_j} a_{j,k}\tilde g_k 
\]
such that $\|g'_j-g\|_{\Lp(X)}<1/j$, 
and hence we obtain a sequence 
$\{g'_j\}_{j=1}^\infty$ 
that converges to $g$ in $\Lp(X)$. 
Note that 
$g\in\Lp(X)$, 
and that for every $n=1,2,\dots$\,, 
the sequence $\{g'_j\}_{j=n}^\infty$ 
consists of \pp-weak upper gradients of $u$ 
with respect to $G_n$. 
It suffices to show that $g$ is a \pp-weak upper gradient of $u$ 
to complete the proof. 

By Fuglede's lemma (Lemma~3.4 in Shanmugalingam~\cite{Shanmugalingam00}), 
we can find a subsequence,  
also denoted by $\{g'_j\}_{j=1}^\infty$,  
and a collection of curves $\Gamma$ in $X$ 
with zero \pp-modulus, 
such that for every curve $\gamma\notin\Gamma$, 
it follows that  
\begin{equation}\label{lem:conv-grad-fuglede}
	\int_\gamma g'_j\,\ds 
	\to \int_\gamma g\,\ds
	\quad\textup{as }j\to\infty.
\end{equation}

For every $n=1,2,\dots$\,, 
let $\Gamma_{n,j}$, $j=n,n+1,\dots$\,, be the collection of curves in $G_n$ 
along which $g'_j$ is not an upper gradient of $u$, 
and let 
\[
	\Gamma' 
	= \Gamma\cup\bigcup_{n=1}^\infty\bigcup_{j=n}^\infty\Gamma_{n,j}.
\]
Then $\Gamma'$ has zero \pp-modulus. 

Let $\gamma\notin\Gamma'$ 
be an arbitrary curve in $X$ with endpoints $x$ and $y$. 
Since $\gamma$ is compact and $G_1,G_2,\dots$\, 
are open sets that exhaust $X$, 
we can find an integer $N$ such that $\gamma\subset G_N$ 
and 
\[
	|u(x)-u(y)| 
	\leq \int_\gamma g'_j\,\ds,
	\quad j=N,N+1,\dots.
\]
It follows that $g$ is a \pp-weak upper gradient of $u$, 
and thus $u\in\Dp(X)$, since 
\[
	|u(x)-u(y)| 
	\leq \lim_{j\to\infty}\int_\gamma g'_j\,\ds 
	= \int_\gamma g\,\ds. 
	\qedhere
\]
\end{proof}
%===============================================================================
\begin{definition}\label{def:Poincare-inequality}
Let $q\geq 1$. 
We say that $X$ supports a $(q,p)$-\emph{Poincar\'e inequality} 
if there exist constants, 
$C>0$ and $\lambda\geq 1$ (the dilation constant), 
such that 
\begin{equation}\label{def:Poincare-inequality-ineq}
	\biggl(\vint_B|u-u_B|^q\,\dmu\biggr)^{1/q} 
	\leq C\diam(B)
		\biggl(\vint_{\lambda B}g^p\,\dmu\biggr)^{1/p}
\end{equation}
for all balls $B\subset X$, 
all integrable functions $u$ on $X$, 
and all upper gradients $g$ of $u$. 
\end{definition}
%===============================================================================
In \eqref{def:Poincare-inequality-ineq}, 
we have used the convenient notation 
$u_B := \vint_B u\,\dmu := \frac{1}{\mu(B)}\int_B u\,\dmu$. 
We usually write \pp-\emph{Poincar\'e inequality} 
instead of $(1,p)$-Poincar\'e inequality. 

Requiring a Poincar\'e inequality to hold is one way of 
making it possible to control functions by their upper gradients. 
%===============================================================================

\section{The obstacle problem} 
\label{sec:obstacle-problem} 
%===============================================================================
\emph{In this section\textup{,} 
we also assume that $X$ is proper and 
supports a $(p,p)$-Poincar\'e inequality\textup{,}  
and that $\Cp(X\setm\Omega)>0$.}

\bigskip
%===============================================================================
Inspired by Kinnunen--Martio~\cite{KiMa02}, 
the following obstacle problem, 
which is a generalization that allows for unbounded sets, 
was defined in Hansevi~\cite{Hansevi15}.
%===============================================================================
\begin{definition}\label{def:obst}
Let $V\subset X$ be a nonempty open subset with $\Cp(X\setm V)>0$. 
For $\psi\colon V\to\eR$ and $f\in\Dp(V)$, 
define 
\[
	\K_{\psi,f}(V)
	= \{v\in\Dp(V):v-f\in\Dp_0(V)
		\textup{ and }v\geq\psi\text{ q.e.\ in }V\}.
\]
A function $u$ is said to be a 
\emph{solution of the }$\K_{\psi,f}(V)$-\emph{obstacle problem 
\textup{(}with obstacle $\psi$ and boundary values $f$\,\textup{)}}
whenever $u\in\K_{\psi,f}(V)$ and 
\[
	\int_V g_u^p\,\dmu 
	\leq \int_V g_v^p\,\dmu
	\quad\textup{for all }v\in\K_{\psi,f}(V).
\]
When $V=\Omega$, 
we usually denote $\K_{\psi,f}(\Omega)$ by $\K_{\psi,f}$ for short.
\end{definition}
%===============================================================================
It was proved in Hansevi~\cite{Hansevi15} that 
the $\K_{\psi,f}$-obstacle problem has a unique 
(up to sets of capacity zero) solution
under the natural condition of 
$\K_{\psi,f}$ being nonempty. 
If the measure $\mu$ is doubling, 
then there is a unique lsc-regularized solution of the 
$\K_{\psi,f}$-obstacle problem whenever $\K_{\psi,f}$ is nonempty 
(Theorem~4.1 in Hansevi~\cite{Hansevi15}). 
The \emph{lsc-regularization} of $u$ is the 
(lower semicontinuous) function $u^*$ defined by 
\[
	u^*(x) 
	= \essliminf_{y\to x}u(y) 
	:= \lim_{r\to 0}\essinf_{B(x,r)} u.
\]

We conclude this section with a proof of a 
new convergence theorem that will be used in 
the proof of Theorem~\ref{thm:Dp-resolutive}. 
It is a generalization of 
Proposition~10.18 in Bj\"orn--Bj\"orn~\cite{BjBj11book} 
to unbounded sets and Dirichlet functions. 
The special case when $\psi_j=f_j\in\Np(\Omega)$ 
had previously been proved in 
Kinnunen--Shanmugalingam~\cite{KiSh06}, 
and a similar result for the double obstacle problem 
was obtained in Farnana~\cite{Farnana10a}. 
%===============================================================================
\begin{theorem}\label{thm:obst-lim}
Let\/ $\{\psi_j\}_{j=1}^\infty$ and\/ $\{f_j\}_{j=1}^\infty$ 
be sequences of functions in $\Dp(\Omega)$ 
that are decreasing q.e.\ to functions 
$\psi$ and $f$ in $\Dp(\Omega)$\textup{,} respectively\textup{,} 
and are such that\/ 
$\|g_{\psi_j-\psi}\|_{\Lp(\Omega)}\to 0$ and\/ 
$\|g_{f_j-f}\|_{\Lp(\Omega)}\to 0$ as $j\to\infty$. 
If $u_j$ is a solution of the $\K_{\psi_j,f_j}$-obstacle problem 
for each $j=1,2,\dots$\,,
then the sequence\/ $\{u_j\}_{j=1}^\infty$ is decreasing q.e.\ in $\Omega$ 
to a function which is a solution of the $\K_{\psi,f}$-obstacle problem.
\end{theorem}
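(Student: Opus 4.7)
The plan is to establish monotonicity of $\{u_j\}$ via the comparison Lemma~\ref{lem:obst-le}, verify that the pointwise limit lies in $\K_{\psi,f}$ by a zero-extension argument combined with Maz\cprime ya's inequality and Lemma~\ref{lem:conv-grad}, and show minimality by constructing explicit competitors to the intermediate obstacle problems. Monotonicity is immediate: since $\psi_{j+1}\leq\psi_j$ q.e.\ and $(f_{j+1}-f_j)_\limplus=0$ q.e.\ (hence trivially in $\Dp_0(\Omega)$), Lemma~\ref{lem:obst-le} gives $u_{j+1}\leq u_j$ q.e., so the limit $u:=\lim_{j\to\infty}u_j$ exists q.e.\ and satisfies $u\geq\psi$ q.e.

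To show $u\in\K_{\psi,f}$, I would work with $\tilde u_j:=(u_j-f_j)\chi_\Omega$ extended by zero to $X\setm\Omega$; then $\tilde u_j\in\Dp(X)$. A uniform bound on $\{g_{u_j}\}$ in $\Lp(\Omega)$ comes from testing the obstacle problem against the competitor $\max\{f_j,\psi_j\}=f_j+(\psi_j-f_j)_\limplus\in\K_{\psi_j,f_j}$ (admissible by Proposition~\ref{prop:obst-solve-crit}), whose energy is uniformly bounded because $g_{f_j}\to g_f$ and $g_{\psi_j}\to g_\psi$ in $\Lp(\Omega)$ via $|g_{h_j}-g_h|\leq g_{h_j-h}$. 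Since $\Cp(X\setm\Omega)>0$, I pick a ball $B_0$ with $\Cp(B_0\cap(X\setm\Omega))>0$ and an exhaustion $\{B_k\}$ of $X$ by balls containing $B_0$; Maz\cprime ya's inequality (Theorem~\ref{thm:Mazyas-ineq}) then yields uniform $\Lp(2B_k)$-bounds on $\tilde u_j$. Lemma~\ref{lem:conv-grad} applied to $\tilde u_j\to(u-f)\chi_\Omega$ q.e.\ on $X$ delivers $(u-f)\chi_\Omega\in\Dp(X)$, so $u-f\in\Dp_0(\Omega)$ and $u\in\K_{\psi,f}$.

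For minimality, fix $v\in\K_{\psi,f}$ and set $v_j:=\max\{v+(f_j-f),\psi_j\}$. Then $v_j\geq\psi_j$ q.e.\ (as $f_j\geq f$), and sandwiching $v-f\leq v_j-f_j\leq\max\{v-f,(\psi_j-f_j)_\limplus\}$ with both extremes in $\Dp_0(\Omega)$ gives $v_j-f_j\in\Dp_0(\Omega)$ by Lemma~\ref{lem:Dp0-police}, so $v_j\in\K_{\psi_j,f_j}$. The key estimate is then $\limsup_{j\to\infty}\int_\Omega g_{v_j}^p\leq\int_\Omega g_v^p$. Writing $A_j:=\{v+(f_j-f)\geq\psi_j\}$ and $B_j:=\Omega\setm A_j$, the local property of minimal \pp-weak upper gradients gives $g_{v_j}^p=g_{v+(f_j-f)}^p\chi_{A_j}+g_{\psi_j}^p\chi_{B_j}$ a.e. On $\{v>\psi\}$, $A_j$ eventually contains this set q.e., and $\Lp$-convergence $g_{v+(f_j-f)}\to g_v$ together with uniform integrability of $g_{\psi_j}^p$ reduces the contribution to $\int_{\{v>\psi\}}g_v^p$; on $\{v=\psi\}$, the local property gives $g_v=g_\psi$ a.e., and $g_{v+(f_j-f)}=g_{\psi+(f_j-f)}$ a.e.\ there, so both restricted gradients are dominated by $g_\psi+g_{f_j-f}+g_{\psi_j-\psi}$, whose $\Lp(\{v=\psi\})$-norm tends to $\|g_\psi\|_{\Lp(\{v=\psi\})}=\|g_v\|_{\Lp(\{v=\psi\})}$.

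To conclude, $\{u_j\}$ is bounded in $\Lploc(\Omega)$: indeed $f\leq f_j\leq f_1$ q.e.\ with $f,f_1\in\Lploc(\Omega)$ by Proposition~\ref{prop:Dploc-Nploc}, and $\{\tilde u_j\}$ is bounded in $\Lploc(X)$ by the Maz\cprime ya bound above. Applying Corollary~\ref{cor:Mazur-consequence} on an exhaustion $\{\Omega_n\}$ with $\Omega_n\Subset\Omega$ and passing to the limit by monotone convergence yields $\int_\Omega g_u^p\leq\liminf_{j\to\infty}\int_\Omega g_{u_j}^p$. Chaining this with the minimality $\int g_{u_j}^p\leq\int g_{v_j}^p$ and the limsup bound above, one obtains $\int_\Omega g_u^p\leq\int_\Omega g_v^p$ for every $v\in\K_{\psi,f}$, so $u$ solves the $\K_{\psi,f}$-obstacle problem. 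The main obstacle will be the partition-based gradient estimate for $v_j$ on the coincidence set $\{v=\psi\}$, which requires the subtle use of the local property to replace $g_{v+(f_j-f)}$ by $g_{\psi+(f_j-f)}$ there and careful bookkeeping to avoid double-counting between the two alternatives defining $v_j$.
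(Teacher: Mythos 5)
Your proposal is correct and follows essentially the same route as the paper's proof: the comparison lemma for monotonicity, the zero-extended differences $u_j-f_j$ controlled via Maz\cprime ya's inequality and Lemma~\ref{lem:conv-grad} to get $u-f\in\Dp_0(\Omega)$, the competitor $\max\{v+f_j-f,\psi_j\}$ admitted through Lemma~\ref{lem:Dp0-police}, and the finish via Corollary~\ref{cor:Mazur-consequence} on an exhaustion. The only cosmetic difference is that you establish $\limsup_j\int_\Omega g_{v_j}^p\,\dmu\leq\int_\Omega g_v^p\,\dmu$ directly from the partition and dominated convergence on $\{\psi<v<\psi_j\}$, whereas the paper proves the slightly stronger statement $g_{\phi_j}\to g_v$ in $\Lp(\Omega)$ by the same mechanism.
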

%-------------------------------------------------------------------------------
\begin{proof}
The comparison principle (Lemma~3.6 in Hansevi~\cite{Hansevi15}) asserts that 
$u_{j+1}\leq u_j$ q.e.\ in $\Omega$ for each $j=1,2,\dots$\,, 
and hence by the subadditivity of the capacity there exists a function $u$ 
such that $\{u_j\}_{j=1}^\infty$ is decreasing 
to $u$ q.e.\ in $\Omega$. 
We will show that $u$ is a solution of the $\K_{\psi,f}$-obstacle problem. 

Let $w_j=u_j-f_j$ and $w=u-f$, 
all functions extended by zero outside $\Omega$. 
Let $B\subset X$ be a ball such that $B\cap\Omega$ is nonempty 
and $\Cp(B'\setm\Omega)>0$ where  
$B':=\tfrac{1}{2}B$. 

We claim that the sequences 
$\{g_{w_j}\}_{j=1}^\infty$ and $\{w_j\}_{j=1}^\infty$  
are bounded in 
$\Lp(X)$ and $\Lp(kB)$, respectively, 
for every $k=1,2,\dots$\,. 
To show this, let $k$ be a positive integer. 
Let \smash{$S=\bigcap_{j=1}^\infty S_j$}, 
where $S_j:=\{x\in X:w_j(x)=0\}$. 
Proposition~4.14 in Bj\"orn--Bj\"orn~\cite{BjBj11book} 
asserts that \smash{$w_j\in\Nploc(X)$}, 
and since 
\[
	\Cp(kB'\cap S_j)
	\geq \Cp(kB'\cap S)
	\geq \Cp(kB'\setm\Omega)
	\geq \Cp(B'\setm\Omega)
	> 0, 
\]
Maz\cprime ya's inequality (Theorem~5.53 in Bj\"orn--Bj\"orn~\cite{BjBj11book}) 
implies the existence of constants $C_{kB,\Omega}>0$ 
and $\lambda\geq 1$ such that
\[
	\int_{kB}|w_j|^p\,\dmu 
	\leq C_{kB,\Omega}\int_{\lambda kB}g_{w_j}^p\,\dmu. 
\]
Let $h_j=\max\{f_j,\psi_j\}$. 
Then 
$0\leq h_j-f_j=(\psi_j-f_j)_\limplus\leq(u_j-f_j)_\limplus$ 
q.e.\ in $\Omega$, 
and hence 
Lemma~2.8 in Hansevi~\cite{Hansevi15} asserts that 
$h_j-f_j\in\Dp_0(\Omega)$. 
Clearly, $h_j\in\K_{\psi_j,f_j}$, and 
since $u_j$ is a solution of the $\K_{\psi_j,f_j}$-obstacle problem, 
it follows that $\|g_{u_j}\|_{\Lp(\Omega)}\leq\|g_{h_j}\|_{\Lp(\Omega)}$. 
We also know that $g_{h_j}\leq g_{\psi_j}+g_{f_j}$ a.e.\ in $\Omega$, 
and therefore the claim follows because 
\begin{align}
	C_{kB,\Omega}^{\,-1/p}\|w_j\|_{\Lp(kB)} 
	&\leq \|g_{w_j}\|_{\Lp(X)} \nonumber \\
	&\leq \|g_{u_j}\|_{\Lp(\Omega)} 
		+ \|g_{f_j}\|_{\Lp(\Omega)} \nonumber \\
	&\leq \|g_{h_j}\|_{\Lp(\Omega)} 
		+ \|g_{f_j}\|_{\Lp(\Omega)} \label{thm:obst-lim-ineq} \\ 
	&\leq \|g_{\psi_j}\|_{\Lp(\Omega)} 
		+ 2\|g_{f_j}\|_{\Lp(\Omega)}  \nonumber \\
	&\leq \|g_{\psi_j-\psi}\|_{\Lp(\Omega)} 
		+ \|g_\psi\|_{\Lp(\Omega)}  
		+ 2\|g_{f_j-f}\|_{\Lp(\Omega)} 
		+ 2\|g_{f}\|_{\Lp(\Omega)} \nonumber.
\end{align}
Lemma~\ref{lem:conv-grad} applies here and asserts that $w\in\Dp(X)$, 
and hence $u-f\in\Dp_0(\Omega)$. 
Because $f\in\Dp(\Omega)$, 
this also shows that $u\in\Dp(\Omega)$. 
Since $\Cp$ is countably subadditive, 
$u\geq\psi$ q.e.\ in $\Omega$, 
and hence $u\in\K_{\psi,f}$. 

Let $v$ be an arbitrary function that belongs to $\K_{\psi,f}$. 
We complete the proof by showing that 
\begin{equation}\label{thm:obst-lim-ineq-minimizer}
	\int_\Omega g_u^p\,\dmu
	\leq \int_\Omega g_v^p\,\dmu.
\end{equation}
Let $\phi_j=\max\{v+f_j-f,\psi_j\}$. 
Clearly, 
$\phi_j\geq\psi_j$ 
and $\phi_j\in\Dp(\Omega)$. 
Furthermore, 
\[
	v-f 
	\leq \max\{v-f,\psi_j-f_j\}
	= \phi_j-f_j
	\leq \max\{v-f,(u_j-f_j)_\limplus\}
	\quad\text{q.e.\ in }\Omega,
\] 
and hence $\phi_j-f_j\in\Dp_0(\Omega)$ by 
Lemma~2.8 in Hansevi~\cite{Hansevi15}. 
We conclude that $\phi_j\in\K_{\psi_j,f_j}$,
and therefore 
\[
	\int_\Omega g_{u_j}^p\,\dmu 
	\leq \int_\Omega g_{\phi_j}^p\,\dmu.
\]

Let $E$ be the set where 
$\{f_j\}_{j=1}^\infty$ decreases to $f$, 
$\{\psi_j\}_{j=1}^\infty$ decreases to $\psi$, 
and simultaneously $v\geq\psi$. 
Then $\Cp(\Omega\setm E)=0$.

Let $U_j=\{x\in E:(f_j-f)(x)<(\psi_j-v)(x)\}$. 
Clearly, $\phi_j-v=\psi_j-v$ in $U_j$ 
and $\phi_j-v=f_j-f$ in $E\setm U_j$, 
and hence it follows that  
\begin{align}
	\int_\Omega g_{\phi_j-v}^p\,d\mu
	&\leq \int_{U_j}(g_{\psi_j-\psi}+g_{\psi-v})^p\,d\mu
	 + \int_{E\setm U_j}g_{f_j-f}^p\,d\mu \nonumber \\
	&\leq 2^p\int_{U_j}g_{\psi-v}^p\,d\mu
	  + 2^p\int_\Omega g_{\psi_j-\psi}^p\,d\mu
	  + \int_\Omega g_{f_j-f}^p\,d\mu, \label{thm:obst-lim-ineq-2}
\end{align}
where the last two integrals tend to zero as $j\to\infty$.

Let $V_j=\{x\in E:\psi(x)<v(x)<\psi_j(x)\}$. 
Since $f_j-f\geq 0$ in $E$, 
we know that $v<\psi_j$ in $U_j$, 
and because $g_{\psi-v}=0$ a.e.\ in 
\[
	\{x\in E:v(x)\leq\psi(x)\}=\{x\in E:v(x)=\psi(x)\},
\] 
it follows that   
\begin{equation}
	\int_{U_j}g_{\psi-v}^p\,d\mu 
	\leq \int_{V_j}g_{\psi-v}^p\,d\mu. \label{thm:obst-lim-ineq-3}
\end{equation}
The fact that $\{\psi_j\}_{j=1}^\infty$ is decreasing to $\psi$ in $E$ 
implies that 
$g_{\psi-v}\chi_{V_j}\to 0$ everywhere in $E$ as $j\to\infty$, 
and since 
$|g_{\psi-v}\chi_{V_j}|\leq g_{\psi-v}\leq g_\psi+g_v$ a.e.\ in $E$ 
and $g_\psi+g_v\in\Lp(E)$, 
dominated convergence asserts that 
\begin{equation}
	\int_{V_j}g_{\psi-v}^p\,d\mu 
	= \int_E g_{\psi-v}^p\chi_{V_j}\,d\mu\to 0
	\quad\textup{as }j\to\infty. \label{thm:obst-lim-eq-vanishing}
\end{equation}
It follows from 
\eqref{thm:obst-lim-ineq-2}, 
\eqref{thm:obst-lim-ineq-3}, and 
\eqref{thm:obst-lim-eq-vanishing} 
that 
$g_{\phi_j}\to g_v$ in $\Lp(\Omega)$ as $j\to\infty$.

Let 
\[
	\Omega_k 
	= \{x\in kB\cap\Omega:\dist(x,\bdy\Omega)>\delta/k\}, 
	\quad k=1,2,\dots,
\] 
where $\delta>0$ is sufficiently small so that $\Omega_1$ is nonempty. 
It is clear that 
\[
	\Omega_1\Subset\Omega_2\Subset\cdots\Subset\Omega
	= \bigcup_{k=1}^\infty\Omega_k.
\]
Fix a positive integer $k$. 
Then $g_u$ and $g_{u_j}$ are minimal \pp-weak upper gradients of 
$u$ and $u_j$, respectively, with respect to $\Omega_k$. 
By Proposition~4.14 in Bj\"orn--Bj\"orn~\cite{BjBj11book}, 
the functions $f$ and $f_j$ belong to $\Lploc(\Omega)$, 
and hence $f$ and $f_j$ are in $\Lp(\Omega_k)$. 
Furthermore, 
$\{f_j\}_{j=1}^\infty$ is decreasing to $f$ q.e.\ in $\Omega$, 
and therefore $|f_j-f|\leq|f_1-f|$ q.e.\ in $\Omega$. 
By \eqref{thm:obst-lim-ineq}, 
we can see that 
$\{w_j\}_{j=1}^\infty$ is bounded in $\Lp(kB)$, 
and also that 
$\{g_{u_j}\}_{j=1}^\infty$ is bounded in $\Lp(\Omega)$. 
Since 
\[
	\|u_j\|_{\Lp(\Omega_k)} 
	\leq \|w_j\|_{\Lp(kB)} 
		+ \|f_1-f\|_{\Lp(\Omega_k)} + \|f\|_{\Lp(\Omega_k)}, 
\]
it follows that 
$\{u_j\}_{j=1}^\infty$ is bounded in $\Np(\Omega_k)$, 
and because $u_j\to u$ q.e.\ in $\Omega$ as $j\to\infty$, 
Corollary~3.3 in Bj\"orn--Bj\"orn--Parviainen~\cite{BjBjPa10} asserts that 
\[
	\int_{\Omega_k}g_u^p\,d\mu 
	\leq \liminf_{j\to\infty}\int_{\Omega_k}g_{u_j}^p\,d\mu 
	\leq \liminf_{j\to\infty}\int_\Omega g_{u_j}^p\,d\mu 
	\leq \liminf_{j\to\infty}\int_\Omega g_{\phi_j}^p\,d\mu
	= \int_\Omega g_v^p\,d\mu.
\]
Letting $k\to\infty$ 
yields \eqref{thm:obst-lim-ineq-minimizer}
and the proof is complete.
\end{proof}
%===============================================================================

If $\mu$ is doubling, 
then $X$ is proper if and only if $X$ is complete 
(see, e.g., Proposition~3.1 in Bj\"orn--Bj\"orn~\cite{BjBj11book}). 
H\"older's inequality implies that 
$X$ supports a \pp-Poincar\'e inequality 
if $X$ supports a $(p,p)$-Poincar\'e inequality. 
The converse is true when $\mu$ is doubling;  
see Theorem~5.1 in Haj\l{}asz--Koskela~\cite{HaKo00}. 
Thus adding the assumption that $\mu$ is doubling 
leads to the rather standard assumptions stated below.
%===============================================================================

\medskip
\emph{We assume from now on 
that\/ $1<p<\infty$\textup{,} 
that $X$ is a complete metric measure space 
supporting a \pp-Poincar\'e inequality\textup{,}  
that $\mu$ is doubling\textup{,} 
and that\/ $\Omega\subset X$ 
is a nonempty \textup{(}possibly unbounded\textup{)} 
open subset with $\Cp(X\setm\Omega)>0$.}
%===============================================================================

\section{\texorpdfstring{\boldmath$p\mspace{1mu}$}{p}-parabolicity}
\label{sec:p-parabolic} 
%===============================================================================
\emph{Note the standing assumptions described at the end of the previous section.}

\bigskip
%===============================================================================
In the proof of Theorem~\ref{thm:Dp-resolutive}, 
we need $\Omega$ to be \pp-parabolic if it is unbounded.
%===============================================================================
\begin{definition}\label{def:p-parabolicity}
If $\Omega$ is unbounded, 
then we say that $\Omega$ is \emph{\pp-parabolic} if 
for every compact $K\subset\Omega$, 
there exist functions $u_j\in\Np(\Omega)$ such that 
$u_j\geq 1$ on $K$ for all $j=1,2,\dots$\,, and 
\begin{equation}\label{def:p-parabolicity-eq}
	\int_\Omega g_{u_j}^p\,\dmu 
	\to 0
	\quad\text{as }j\to\infty.
\end{equation}
Otherwise, $\Omega$ is said to be \emph{\pp-hyperbolic}.
\end{definition}
%===============================================================================
In Definition~\ref{def:p-parabolicity}, 
we may as well use 
$u_j\in\Dp(\Omega)$ 
with bounded support such that 
$\chi_K\leq u_j\leq 1$, $j=1,2,\dots$ 
(see, e.g., the proof of Lemma~5.43 in Bj\"orn--Bj\"orn~\cite{BjBj11book}). 
%===============================================================================
\begin{remark}\label{rem:p-parabolic-subset}
If $\Omega_1\subset\Omega_2$, 
then $\Omega_1$ is \pp-parabolic 
whenever $\Omega_2$ is \pp-parabolic. 
\end{remark}
%===============================================================================
Holopainen--Shanmugalingam~\cite{HoSh02} 
proposed a definition of \pp-harmonic Green functions 
(i.e., fundamental solutions of the \pp-Laplace operator) 
on metric spaces. 
The functions they defined 
did, however, not share all characteristics with Green functions, 
and therefore they gave them another name; 
they called them \emph{\pp-singular functions}. 
Theorem~3.14 in \cite{HoSh02} asserts that 
if $X$ is locally linearly locally connected 
(see Section~2 in \cite{HoSh02} for the definition), 
then the space $X$ is \pp-hyperbolic 
if and only if 
for every $y\in X$ 
there exists a \pp-singular function 
with singularity at $y$.
%===============================================================================
\begin{example}\label{rem:Rn-p-parabolic}
The space $\R^n$, $n\geq 1$, is \pp-parabolic 
if and only if $p\geq n$. 
(It follows that all open subsets of $\R^n$ 
are \pp-parabolic for all $p\geq n$; 
see Remark~\ref{rem:p-parabolic-subset}.)

To see this, 
assume that $p\geq n$ and let $K\subset\R^n$ be compact. 
Choose $R$ sufficiently large so that 
$K\subset B:=B(0,R)$. 
Let
\begin{equation}\label{rem:Rn-p-parabolic-eq}
	u_j(x) 
	= \min\biggl\{1,\biggl(1-\frac{\log|x/R|}{j}\biggr)_+\,\biggr\},
	\quad j=1,2,\dots.
\end{equation}
Then $\{u_j\}_{j=1}^\infty$ is a sequence of admissible functions 
for \eqref{def:p-parabolicity-eq}, 
and 
\[
	g_{u_j}
	= (j\,|x|)^{-1}\chi_{B_j\setm B},
	\quad j=1,2,\dots,
\]
where $B_j:=B(0,Re^j)$. 
It follows that 
\[
	\int_{\R^n}g_{u_j}^p\,\dx 
	= C_n\int_R^{Re^j}\frac{r^{n-1}}{(jr)^p}\,\dr 
	= C_n\begin{cases}
			\dfrac{R^{n-p}(1-e^{-j(p-n)})}{(p-n)j^p} & \text{if }p>n, \\
			\,j^{1-p} & \text{if }p=n,
		\end{cases}
\]
and hence 
$\int_{\R^n}g_{u_j}^p\,\dx\to 0$ as $j\to\infty$.

The necessity follows from Theorem~3.14 in 
Holopainen--Shanmugalingam~\cite{HoSh02},
because if we assume that $p<n$ and let $y\in\R^n$, 
then 
\[
	f(x)=|x-y|^{\tfrac{p-n}{p-1}}, 
	\quad x\in\R^n,
\]
is a Green function with singularity at $y$ 
that is \pp-harmonic in $\R^n\setm\{y\}$. 
\end{example}
%===============================================================================

\medskip
A set can be \pp-parabolic 
if it does not ``grow too much'' towards infinity, 
even though the surrounding space is not \pp-parabolic. 
%===============================================================================
\begin{example}
Let $n\geq 2$ and 
assume that 
$1<p<n$. 
Let 
\[
	\Omega_f 
	= \{x=(x',\tilde x)\in\R\times\R^{n-1}:0<x'<f(|\tilde x|)\}, 
\]
where 
\[
	f(r) \leq 
	\begin{cases}
		C & \text{if } r < 1, \\
		Cr^q & \text{if } r\geq 1,
	\end{cases}
\]
and $q\leq p-n+1$ 
(note that $q<1$ since $p<n$). 

Let $K\subset\Omega_f$ be compact. 
Choose $R$ sufficiently large so that 
$K\subset B:=B(0,R)$. 
It can be chosen large enough so that 
$|\tilde x|\geq R/2\geq 1$ for all $(x',\tilde x)\in\Omega_f\setm B$. 
This is possible since $q<1$ and $f(r)<Cr^q$. 
Define the sequence of admissible functions $\{u_j\}_{j=1}^\infty$ 
as in \eqref{rem:Rn-p-parabolic-eq}. 
Then 
\begin{align*}
	\int_{\Omega_f}g_{u_j}^p\,\dx 
	&= \int_{\R^{n-1}}\int_0^{f(|\tilde x|)} 
		\frac{\chi_{B_j\setm B}}{(j|x|)^p}\,\dx'\,d\tilde x \\
	&\leq \frac{C_{n-1}}{j^p}\int_{R/2}^{Re^j}\frac{f(r)}{r^p}\,r^{n-2}\,\dr 
	= \frac{C'_{n-1}}{j^p}\int_{R/2}^{Re^j}r^{q-p+n-2}\,\dr 
	=: I_j.
\end{align*}
Since 
\[
	\int_{R/2}^{Re^j} r^{q-p+n-2}\,\dr 
	= \begin{cases}
		j+\log{2} & \text{if }q=p-n+1, \\[0.5em]
		\dfrac{(e^{j(q-p+n-1)}-2^{-(q-p+n-1)})R^{q-p+n-1}}{q-p+n-1} 
			& \text{if }q<p-n+1,
	\end{cases}
\]
it follows that	
$\int_{\Omega_f}g_{u_j}^p\,\dx\leq I_j\to 0$ as $j\to\infty$. 
Thus $\Omega_f$ is \pp-parabolic 
(while $\R^n$ is not \pp-parabolic since $p<n$ in this case).
\end{example}
%===============================================================================

\section{\texorpdfstring{\boldmath$p\mspace{1mu}$}{p}-harmonic and superharmonic functions}
\label{sec:p-harmonic} 
%===============================================================================
\emph{The standing assumptions are 
described at the end of Section~\ref{sec:obstacle-problem}.}

\bigskip
%===============================================================================
There are many equivalent definitions of (super)minimizers 
(or, more accurately, \pp-(super)minimizers) 
in the literature 
(see, e.g., Proposition~3.2 in A.~Bj\"orn~\cite{BjornA06a}). 
%===============================================================================
\begin{definition}\label{def:min}
We say that a function $u\in\Nploc(\Omega)$ 
is a \emph{superminimizer} in $\Omega$ if  
\begin{equation}\label{def:min-eq}
	\int_{\phi\neq 0}g_u^p\,\dmu 
	\leq \int_{\phi\neq 0}g_{u+\phi}^p\,\dmu
\end{equation}
holds for all nonnegative $\phi\in\Np_0(\Omega)$, 
and a \emph{minimizer} in $\Omega$ if 
\eqref{def:min-eq} holds for all $\phi\in\Np_0(\Omega)$. 
Moreover, a function is \emph{\pp-harmonic} 
if it is a continuous minimizer.
\end{definition}
%===============================================================================
According to Proposition~3.2 in A.~Bj\"orn~\cite{BjornA06a}, 
it is in fact only necessary to test \eqref{def:min-eq} with 
(all nonnegative and all, respectively) $\phi\in\Lipc(\Omega)$. 

Proposition~3.9 in Hansevi~\cite{Hansevi15} asserts that a function $u$ 
is a superminimizer in $\Omega$ 
if $u$ is a solution of the $\K_{\psi,f}$-obstacle problem.

The following definition makes sense due to 
Theorem~4.4 in Hansevi~\cite{Hansevi15}. 
Because Proposition~2.7 in Bj\"orn--Bj\"orn~\cite{BjBj12a} 
asserts that $\Dp_0(\Omega)=\Np_0(\Omega)$ if $\Omega$ is bounded, 
it is a generalization of 
Definition~8.31 in Bj\"orn--Bj\"orn~\cite{BjBj11book} 
to Dirichlet functions and to unbounded sets. 
%===============================================================================
\begin{definition}\label{def:ext}
Let $V\subset X$ be a nonempty open set with $\Cp(X\setm V)>0$. 
The \emph{\pp-harmonic extension} 
$\Hp_V f$ of $f\in\Dp(V)$ to $V$ is the continuous solution 
of the $\K_{-\infty,f}(V)$-obstacle problem. 
When $V=\Omega$ we usually write $\Hp f$ instead of $\Hp_\Omega f$.
\end{definition}
%===============================================================================
If $f$ is defined outside $V$, 
then we sometimes consider $\Hp_V f$ to be equal to $f$ 
in some set outside $V$ where $f$ is defined. 

A Lipschitz function $f$ on $\bdy V$ can be extended to 
a Lipschitz function $\bar f$ on $\overline V$ 
(see, e.g., Theorem~6.2 in Heinonen~\cite{Heinonen01}), 
and $\bar f\in\Np(\overline V)$ 
if $V$ is bounded. 
The comparison principle (Lemma~4.7 in Hansevi~\cite{Hansevi15}) 
implies that 
$\Hp_V \bar f$ does not depend on the 
particular choice of extension $\bar f$. 
We can therefore define the \pp-harmonic extension for 
Lipschitz functions on the boundary by 
$\Hp_V f:=\Hp_V\bar f$ 
if $V$ is bounded. 
%===============================================================================
\begin{proposition}\label{prop:Hf-lim}
If\/ $\{f_j\}_{j=1}^\infty$ is a sequence of functions in $\Dp(\Omega)$ 
that is decreasing q.e.\ in\/ $\Omega$ to $f\in\Dp(\Omega)$ 
and\/ $\|g_{f_j-f}\|_{\Lp(\Omega)}\to 0$ as $j\to\infty$\textup{,} 
then $\Hp f_j$ decreases to $\Hp f$ locally uniformly in\/ $\Omega$.
\end{proposition}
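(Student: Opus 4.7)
The plan is to reduce the claim to the obstacle-problem convergence theorem (Theorem~\ref{thm:obst-lim}) by using the constant obstacle $\psi\equiv\Hp f$, and then to upgrade the resulting quasieverywhere decrease to a locally uniform one by regularity considerations.

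\emph{Step 1 (monotonicity and a priori lower bound).} Since $f_{j+1}\le f_j$ q.e.\ in $\Omega$, the function $(f_{j+1}-f_j)_\limplus$ vanishes q.e., and therefore lies in $\Dp_0(\Omega)$ by Lemma~\ref{lem:Dp0-police} (sandwich $0\le(f_{j+1}-f_j)_\limplus\le 0$). The same reasoning applied to $f\le f_j$ gives $(f-f_j)_\limplus\in\Dp_0(\Omega)$. Lemma~\ref{lem:comp-principle} then yields
\[
\Hp f \;\le\; \Hp f_{j+1} \;\le\; \Hp f_j \quad\text{in } \Omega.
\]

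\emph{Step 2 (identifying the q.e.\ limit).} Put $\psi_j\equiv\psi:=\Hp f\in\Dp(\Omega)$, so that the gradient hypothesis $g_{\psi_j-\psi}\equiv 0$ is trivially satisfied. I first check that $\K_{\psi,f_j}$ is nonempty via Proposition~\ref{prop:obst-solve-crit}: since $\Hp f-f\in\Dp_0(\Omega)$ by Definition~\ref{def:ext} and $f\le f_j$ q.e., the sandwich
\[
0 \;\le\; (\Hp f-f_j)_\limplus \;\le\; (\Hp f-f)_\limplus \quad\text{q.e.\ in } \Omega,
\]
combined with Lemma~\ref{lem:Dp0-police}, gives $(\Hp f-f_j)_\limplus\in\Dp_0(\Omega)$. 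By Step~1, $\Hp f_j\ge\Hp f$ in $\Omega$ and $\Hp f_j-f_j\in\Dp_0(\Omega)$, so $\Hp f_j\in\K_{\psi,f_j}$; because $\Hp f_j$ minimizes the \pp-energy over the larger unconstrained class $\{v\in\Dp(\Omega):v-f_j\in\Dp_0(\Omega)\}$, it is \emph{a fortiori} a solution of the $\K_{\psi,f_j}$-obstacle problem. The analogous argument with $f$ in place of $f_j$ shows that $\Hp f$ is a solution of $\K_{\psi,f}$. Theorem~\ref{thm:obst-lim} therefore asserts that $\Hp f_j$ decreases q.e.\ to a solution of $\K_{\psi,f}$, which by q.e.\ uniqueness of the obstacle problem coincides q.e.\ with $\Hp f$.

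\emph{Step 3 (upgrade to locally uniform convergence).} The monotone pointwise limit $w(x):=\lim_j\Hp f_j(x)$ exists everywhere in $\Omega$, is upper semicontinuous (as a decreasing limit of continuous functions), satisfies $w\ge\Hp f$ pointwise, and equals $\Hp f$ q.e. Since $\{\Hp f_j\}$ is sandwiched between the continuous \pp-harmonic functions $\Hp f$ and $\Hp f_1$, it is locally uniformly bounded in $\Omega$; the standard interior H\"older/equicontinuity estimates for \pp-harmonic functions in our setting then imply that $\{\Hp f_j\}$ is locally equicontinuous, so every subsequence has a further subsequence converging locally uniformly on compact subsets of $\Omega$ to a continuous function that must equal $\Hp f$ q.e., and therefore everywhere. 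Consequently the whole monotone sequence decreases locally uniformly to $\Hp f$. I expect this final upgrade to be the main obstacle, since it is the only step that appeals to something outside the obstacle-problem machinery in the excerpt; an alternative route is to combine the upper semicontinuity of $w$ with the fact that sets of $\Cp$-capacity zero have empty interior (so $\{w=\Hp f\}$ is dense in $\Omega$) to first deduce $w=\Hp f$ pointwise, and then apply Dini's theorem on compact subsets.
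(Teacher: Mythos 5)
Your proof is correct and, in its core, takes the same route as the paper: the comparison principle (Lemma~\ref{lem:comp-principle}) gives $\Hp f\le\Hp f_{j+1}\le\Hp f_j$, and Theorem~\ref{thm:obst-lim} applied to obstacle problems solved by $\Hp f_j$ and $\Hp f$ yields the q.e.\ decrease to $\Hp f$. Your choice of the problems $\K_{\Hp f,f_j}$ (fixed obstacle $\Hp f$, varying boundary data $f_j$), together with the verification that $\Hp f_j$ belongs to this class by Step~1 and is a fortiori a minimizer there because it minimizes over the larger class $\K_{-\infty,f_j}$, is exactly the right setup; the paper writes ``$\K_{f_j,\Hp f}$'', which with the convention of Definition~\ref{def:obst} must be read with the indices interchanged, since $\Hp f_j\ge f_j$ need not hold, so your more careful formulation is the correct reading. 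The only genuine divergence is the final upgrade from q.e.\ decrease to locally uniform convergence: the paper simply cites Proposition~5.1 of Shanmugalingam~\cite{Shanmugalingam03} (a decreasing sequence of \pp-harmonic functions with locally bounded limit converges locally uniformly to a \pp-harmonic function), whereas you in effect re-prove such a statement via interior H\"older/equicontinuity estimates, Arzel\`a--Ascoli, identification of the limit q.e.\ and hence everywhere, and Dini; this works, but it imports regularity theory not contained in the excerpt, so the citation is the shorter path. One caution: the parenthetical ``alternative route'' at the end of Step~3 does not work as stated, because upper semicontinuity of $w$ gives $w(x)\ge\limsup_{y\to x}w(y)$, which is the wrong direction for concluding $w(x)\le\Hp f(x)$ from density of $\{w=\Hp f\}$; the everywhere identification has to go through the continuity of the limit, as in your main argument (or through the \pp-harmonicity of the limit, as in the cited convergence result).
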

%------------------------------------------------------------------------------
\begin{proof}
By the comparison principle (Lemma~4.7 in Hansevi~\cite{Hansevi15}), 
it follows that  
$\Hp f_j\geq\Hp f_{j+1}\geq\Hp f$ in $\Omega$ for all $j=1,2,\dots$\,. 
Since $\Hp f_j$ and $\Hp f$ are the continuous solutions of the 
$\K_{f_j,\Hp f}$- and $\K_{f,\Hp f}$-obstacle problems, 
respectively, 
it follows from Theorem~\ref{thm:obst-lim} 
that $\Hp f_j$ decreases to $\Hp f$ q.e.\ in $\Omega$ 
as $j\to\infty$. 

Because $\Hp f$ is continuous, 
and therefore locally bounded, 
Proposition~5.1 in Shanmugalingam~\cite{Shanmugalingam03} 
implies that $\Hp f_j\to\Hp f$ locally uniformly in $\Omega$ as $j\to\infty$.
\end{proof}
%===============================================================================

In order to define Perron solutions, 
we need superharmonic functions. 
We follow Kinnunen--Martio~\cite{KiMa02}, 
however, we use a slightly different, 
nevertheless equivalent, 
definition (see, e.g., Proposition~9.26 in Bj\"orn--Bj\"orn~\cite{BjBj11book}).
%===============================================================================
\begin{definition}\label{def:superharm}
We say that a function $u\colon\Omega\to(-\infty,\infty]$ 
is \emph{superharmonic} in $\Omega$ if 
\begin{enumerate}
\item $u$ is lower semicontinuous;
\item $u$ is not identically $\infty$ 
		in any component of $\Omega$;
\item \label{def:superharm-c} for every nonempty open set 
		$V'\Subset \Omega$ and all 
		$v\in\Lip(\bdy V')$, 
		we have $\Hp_{V'}v\leq u$ in $V'$ 
		whenever $v\leq u$ on $\bdy V'$. 
\end{enumerate}
A function $u\colon\Omega\to[-\infty,\infty)$ 
is \emph{subharmonic} in $\Omega$ 
if the function $-u$ is superharmonic.
\end{definition}
%===============================================================================

\section{Perron solutions}
\label{sec:Perron} 
%===============================================================================
\emph{The standing assumptions are 
described at the end of Section~\ref{sec:obstacle-problem}. 
We make the convention from now on that 
the point at infinity\textup{,} 
$\infty$\textup{,} 
belongs to the boundary $\bdy\Omega$ 
if\/ $\Omega$ is unbounded. 
Topological notions should therefore be 
understood with respect to 
the one-point compactification $X^*:=X\cup\{\infty\}$.}

\bigskip
%===============================================================================
\begin{definition}\label{def:Perron}
Given a function $f\colon\bdy\Omega\to\eR$, 
we let $\U_f(\Omega)$ be the set of all 
superharmonic functions $u$ in $\Omega$ 
that are bounded below 
and such that 
\[
	\liminf_{\Omega\ni y\to x}u(y) 
	\geq f(x)
	\quad\textup{for all }x\in\bdy\Omega. 
\]
Then the 
\emph{upper Perron solution} of $f$ is defined by 
\[
	\uPp_\Omega f(x) 
	= \inf_{u\in\U_f(\Omega)}u(x),
	\quad x\in\Omega.
\]
Similarly, 
we let $\LL_f(\Omega)$ be the set of all 
subharmonic functions $v$ in $\Omega$ 
that are bounded above 
and such that 
\[
	\limsup_{\Omega\ni y\to x}v(y) 
	\leq f(x)
	\quad\textup{for all }x\in\bdy\Omega, 
\]
and define the 
\emph{lower Perron solution} of $f$ by 
\[
	\lPp_\Omega f(x) 
	= \sup_{v\in\LL_f(\Omega)}v(x),
	\quad x\in\Omega.
\]
If $\uPp_\Omega f=\lPp_\Omega f$, 
then we let $\Pp_\Omega f:=\uPp_\Omega f$. 
Moreover, if $\Pp_\Omega f$ is real-valued, 
then $f$ is said to be \emph{resolutive} 
(with respect to $\Omega$). 
We often write $\Pp f$ instead of $\Pp_\Omega f$.
\end{definition}
%===============================================================================
Immediate consequences of the above definition are that 
$\lPp f=-\uPp(-f)$ and that $\uPp f\leq\uPp h$ if $f\leq h$. 
It also follows that 
$\uPp f=\lim_{k\to\infty}\uPp\max\{f,-k\}$.

In each component of $\Omega$, 
$\uPp f$ is either \pp-harmonic or identically $\pm\infty$, 
see, e.g.,  
Bj\"orn--Bj\"orn~\cite{BjBj11book} 
(their proof applies also to unbounded $\Omega$). 
Thus Perron solutions are reasonable candidates 
for solutions of the Dirichlet problem. 

The following theorem extends the comparison principle, 
which is fundamental for the nonlinear potential theory 
of superharmonic functions, 
and also plays an important role for the Perron method. 
%===============================================================================
\begin{theorem}\label{thm:comparison}
If $u$ is superharmonic and $v$ is subharmonic in\/ $\Omega$\textup{,} 
then $v\leq u$ in\/ $\Omega$ whenever 
\begin{equation} \label{thm:comparison-ineq}
	\infty 
	\neq \limsup_{\Omega\ni y\to x}v(y)
	\leq \liminf_{\Omega\ni y\to x}u(y)
	\neq -\infty
\end{equation}
for all $x\in\bdy\Omega$ 
\textup{(}i.e., also for $x=\infty$ if\/ $\Omega$ is unbounded\,\textup{)}. 
\end{theorem}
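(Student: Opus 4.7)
My plan is to reduce to the known bounded comparison principle by a standard super-level-set argument, where the main novelty compared to the bounded setting is that compactness must be invoked in the one-point compactification $X^*$ rather than in $X$. Suppose, for contradiction, that $v(x_0)>u(x_0)$ for some $x_0\in\Omega$, and fix $\eps>0$ with $v(x_0)-u(x_0)>2\eps$. Since $u$ is lsc (being superharmonic) and $v$ is usc (as $-v$ is superharmonic, hence lsc), the super-level set
\[
    V := \{x\in\Omega:v(x)>u(x)+\eps\}
\]
is open and contains $x_0$; moreover both $u$ and $v$ are necessarily finite on $V$, using that $v<\infty$ and $u>-\infty$ throughout $\Omega$.

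The step I expect to be the main work is proving $V\Subset\Omega$, where closure is taken in $X^*$ according to the standing convention. For each $x\in\bdy\Omega$ in $X^*$ (including $x=\infty$ when $\Omega$ is unbounded), the hypothesis \eqref{thm:comparison-ineq} yields
\[
    \limsup_{\Omega\ni y\to x}(v-u)(y)\leq\limsup_{\Omega\ni y\to x}v(y)-\liminf_{\Omega\ni y\to x}u(y)\leq 0<\eps,
\]
so some $X^*$-open neighborhood $U_x$ of $x$ is disjoint from $V$. Hence the $X^*$-closure $\itoverline V$ avoids $\bdy\Omega$, and since $X$ is proper the compactification $X^*$ is compact Hausdorff; therefore $\itoverline V\subset\Omega$ is compact and $V\Subset\Omega$. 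This is the single place where the hypothesis at $\infty$ really enters.

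Once this is in hand, I would apply the bounded comparison principle for sub- and superharmonic functions (standard in this metric setting; cf.\ Bj\"orn--Bj\"orn~\cite{Boken}) on $V$. Its hypotheses are routine to verify: $V$ is a nonempty bounded open set with $\Cp(X\setm V)\geq\Cp(X\setm\Omega)>0$; the functions $v$ and $u+\eps$ are respectively subharmonic and superharmonic on $V$; and for each $z\in\bdy V\subset\Omega$, openness of $V$ gives $v(z)\leq u(z)+\eps$, which together with semicontinuity produces
\[
    \limsup_{V\ni y\to z}v(y)\leq v(z)\leq u(z)+\eps\leq\liminf_{V\ni y\to z}(u(y)+\eps),
\]
with the outer quantities not equal to $\infty$ and $-\infty$ respectively. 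The bounded principle then delivers $v\leq u+\eps$ on $V$, contradicting the membership $x_0\in V$ with $v(x_0)-u(x_0)>2\eps$. Apart from the compactification step, every ingredient is drawn from the existing bounded theory.
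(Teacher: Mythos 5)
Your proof is correct, and it takes a genuinely different route from the paper's. You reduce the unbounded case to the known bounded comparison principle (Theorem~9.39 in Bj\"orn--Bj\"orn~\cite{Boken}) applied to the super-level set $V=\{x\in\Omega: v(x)>u(x)+\eps\}$: the hypothesis at every boundary point of $\Omega$ in $X^*$ (including $\infty$) gives neighborhoods disjoint from $V$, so the $X^*$-closure of $V$ misses $\bdy\Omega$ and, by compactness of $X^*$, one gets $V\Subset\Omega$; the boundary verification on $\bdy V\subset\Omega$ via semicontinuity and $z\notin V$ is sound, as are the checks $\Cp(X\setm V)\geq\Cp(X\setm\Omega)>0$ and that $v$ and $u+\eps$ remain sub-/superharmonic on $V$ (the latter is the standard restriction property; it is harmless here since superharmonic functions are finite a.e., so $u+\eps$ is not identically $\infty$ on any component of $V$). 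The paper instead argues directly and without contradiction: it covers $\overline\Omega$ in $X^*$ by an exhaustion $\Omega_1\Subset\Omega_2\Subset\cdots$ together with the boundary neighborhoods $U_x$, extracts a finite subcover to conclude $v\leq u+\eps$ on $\bdy\Omega_k$ for large $k$, and then runs the core comparison on $\Omega_k$ from the definition of superharmonicity, approximating $v$ from above by a decreasing sequence of Lipschitz functions and comparing through $\Hp_{\Omega_k}\phi_M$, before letting $\eps\to 0$. So both proofs hinge on the same compactness step in $X^*$; yours buys brevity and modularity by invoking the bounded theorem as a black box (at the cost of the small auxiliary facts about restriction and the level-set construction), while the paper's version is self-contained, using only the defining property of superharmonic functions on relatively compact exhaustion sets and thereby essentially reproving the bounded case in the compactified setting. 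One cosmetic remark: the margin $2\eps$ at $x_0$ is unnecessary, since $x_0\in V$ already contradicts $v\leq u+\eps$ on $V$.
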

%===============================================================================
\begin{corollary}\label{cor:comparison}
If $f\colon\bdy\Omega\to\eR$, 
then $\lPp f\leq\uPp f$.
\end{corollary}
%------------------------------------------------------------------------------
\begin{proof}[Proof of Theorem~\ref{thm:comparison}]
Fix $\eps>0$. 
For each $x\in\bdy\Omega$, 
it follows from \eqref{thm:comparison-ineq} that 
\[
	\liminf_{\Omega\ni y\to x}(u(y)-v(y)) 
	\geq \liminf_{\Omega\ni y\to x}u(y) - \limsup_{\Omega\ni y\to x}v(y)
	\geq 0, 
\]
and hence there is an open set $U_x\subset X^*$ such that $x\in U_x$ and 
\[
	u-v 
	\geq -\eps
	\quad\textup{in }U_x\cap\Omega. 
\]

Let $\Omega_1,\Omega_2,\dots$ be open sets such that 
$\Omega_1\Subset\Omega_2\Subset\cdots\Subset\Omega=\bigcup_{k=1}^\infty\Omega_k$. 
Then 
\[
	\clOm 
	\subset \bigcup_{k=1}^\infty\Omega_k
	\;\cup\bigcup_{x\in\bdy\Omega}U_x.
\]
Since $\clOm$ is compact 
(with respect to the topology of $X^*$), 
there exist integers 
$k>1/\eps$ and $N$ 
such that 
\[
	\clOm
	\subset\Omega_k\cup U_{x_1}\cup\cdots\cup U_{x_N}.
\]
It follows that $v\leq u+\eps$ on $\bdy\Omega_k$. 
Since $v$ is upper semicontinuous 
(and does not take the value $\infty$), 
it follows that there is a decreasing sequence 
$\{\phi_j\}_{j=1}^\infty\subset\Lip(\clOm_k)$ 
such that $\phi_j\to v$ on \smash{$\clOm_k$} 
as $j\to\infty$ 
(see, e.g., Proposition~1.12 in Bj\"orn--Bj\"orn~\cite{BjBj11book}).  
 
Since $u+\eps$ is lower semicontinuous, 
the compactness of $\bdy\Omega_k$ shows that 
there exists an integer $M$ such that $\phi_M\leq u+\eps$ 
on $\bdy\Omega_k$, 
and, by \ref{def:superharm-c} in Definition~\ref{def:superharm}, 
also that $\Hp_{\Omega_k}\phi_M\leq u+\eps$ in $\Omega_k$. 
Similarly, $v\leq\Hp_{\Omega_k}\phi_M$, 
and thus $v\leq u+\eps$ in $\Omega_k$.
Letting $\eps\to 0$ (and hence letting $k\to\infty$) 
implies that $v\leq u$ in $\Omega$. 
\end{proof}
%===============================================================================

\section{Resolutivity of functions on \texorpdfstring{\boldmath$\bdy\Omega$}{the boundary}}
\label{sec:resolutivity} 
%===============================================================================
\emph{In addition to the standing assumptions 
described at the end of Section~\ref{sec:obstacle-problem}\textup{,} 
we assume that\/ $\Omega$ is \pp-parabolic 
if\/ $\Omega$ is unbounded \textup{(}see Definition~\ref{def:p-parabolicity}\textup{)}. 
For the convention about the point at infinity\textup{,} 
see the beginning of Section~\ref{sec:Perron}.}

\bigskip
%===============================================================================
When Bj\"orn--Bj\"orn--Shanmugalingam~\cite{BjBjSh13a} 
extended the Perron method to the Mazurkiewicz boundary 
of bounded domains that are finitely connected at the boundary, 
they introduced a new capacity, 
$\bCpO$, 
adapted to the topology that connects the domain to its Mazurkiewicz boundary. 
They also used the new capacity to define 
$\bCpO$-quasicontinuous functions. 
By using $\bCpO$, 
which is smaller than the usual Sobolev capacity 
(see the appendix of \cite{BjBjSh13a}), 
we allow for perturbations on larger sets 
and we obtain resolutivity for more functions.
%===============================================================================
\begin{definition}\label{def:new-capacity}
The $\bCpO$-\emph{capacity} 
of a set $E\subset\clOm$ is the number 
\[
	\bCp(E;\Omega) 
	:= \inf_{u\in\calV_E}\|u\|_{\Np(\Omega)}^p
\]
where $\calV_E$ is the family of all functions 
$u\in\Np(\Omega)$ that satisfy both 
$u(x)\geq 1$ for all $x\in E\cap\Omega$ and 
\begin{equation}\label{def:new-capacity-ineq}
	\liminf_{\Omega\ni y\to x}u(y) 
	\geq 1 
	\quad\textup{for all }x\in E\cap\bdy\Omega. 
\end{equation}

When a property holds for all points 
except for points in a set of 
$\bCpO$-capacity zero, 
it is said to hold \emph{$\bCpO$-quasieverywhere} 
(or $\bCpO$-\emph{q.e.} for short). 
\end{definition}
%===============================================================================
If $E\subset\Omega$, 
then condition \eqref{def:new-capacity-ineq} 
becomes empty and 
$\bCp(E;\Omega)=\Cp(E;\Omega)$. 

The capacity $\bCpO$ shares several properties 
with the Sobolev capacity, 
e.g., monotonicity and countable subadditivity. 
Moreover, 
$\bCpO$ is an outer capacity, i.e., 
if $E\subset\clOm$\textup{,} 
then  
\[
	\bCp(E;\Omega) 
	= \inf_{\substack{G\supset E \\ G\text{ relatively open in }\clOm}}
	\bCp(G;\Omega). 
\]
These results are proved in Bj\"orn--Bj\"orn--Shanmugalingam~\cite{BjBjSh13a} 
(a slightly modified version of their proof that $\bCpO$ is outer 
is valid in our setting as well). 

To prove Theorem~\ref{thm:Dp-resolutive}, 
we need the following version of 
Lemma~5.3 in Bj\"orn--Bj\"orn--Shanmugalingam~\cite{BjBjSh03b}.
%===============================================================================
\begin{lemma}\label{lem:5.3-BjBjSh03b}
Assume that\/ $\{U_k\}_{k=1}^\infty$ is a decreasing sequence 
of relatively open subsets of\/ $\clOm$ with 
$\bCp(U_k;\Omega)<2^{-kp}$. 
Then there exists a sequence of nonnegative functions 
$\{\psi_j\}_{j=1}^\infty$ 
that decreases to zero q.e.\ in $\Omega$\textup{,} 
such that\/ 
$\|\psi_j\|_{\Np(\Omega)}<2^{-j}$ and  
$\psi_j\geq k-j$ in $U_k\cap\Omega$.
\end{lemma}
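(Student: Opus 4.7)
The strategy is the standard capacitary series construction. For each $k=1,2,\dots$, I use the outer capacity characterization together with the hypothesis $\bCp(U_k;\Omega)<2^{-kp}$ to pick a function $u_k\in\calA_{U_k}$ with $0\leq u_k\leq 1$ and $\|u_k\|_{\Np(\Omega)}<2^{-k}$; recall from the remark in the excerpt that in Definition~\ref{def:new-capacity} it is no loss of generality to restrict to functions in $\calA'_{U_k}=\{u\in\calA_{U_k}:0\leq u\leq 1\}$. I then define, pointwise on $\Omega$,
\[
    \psi_j(x):=\sum_{k=j+1}^\infty u_k(x)\in[0,\infty],
    \qquad j=1,2,\dots.
\]
Since $u_{j+1}\geq 0$ we automatically have $\psi_j-\psi_{j+1}=u_{j+1}\geq 0$, so the sequence $\{\psi_j\}_{j=1}^\infty$ is decreasing and nonnegative, and $\psi_j=0$ wherever all tail terms vanish.

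The required lower bound follows directly from the monotonicity of $\{U_k\}$. Fix $k>j$. Because the sets are nested, $U_k\cap\Omega\subset U_l\cap\Omega$ for every $l\in\{j+1,\dots,k\}$, so $u_l\geq 1$ there. Hence at every point of $U_k\cap\Omega$,
\[
    \psi_j\;\geq\;\sum_{l=j+1}^k u_l\;\geq\;k-j,
\]
and for $k\leq j$ the inequality $\psi_j\geq k-j$ is trivial from $\psi_j\geq 0$. So the bound $\psi_j\geq k-j$ on $U_k\cap\Omega$ holds for all $k$.

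The main technical point is the norm estimate $\|\psi_j\|_{\Np(\Omega)}<2^{-j}$, because one must certify both that the pointwise sum belongs to $\Np(\Omega)$ and that its seminorm behaves well under infinite summation. I would proceed as follows. Set $v_N=\sum_{k=j+1}^{j+N}u_k$; the $\Np$-seminorm is subadditive, so $\|v_N\|_{\Np(\Omega)}\leq\sum_{k=j+1}^{j+N}\|u_k\|_{\Np(\Omega)}<\sum_{k=j+1}^\infty 2^{-k}=2^{-j}$. For each $k$ choose an upper gradient $g_k\in\Lp(\Omega)$ of $u_k$ with $\|u_k\|_{\Lp(\Omega)}^p+\|g_k\|_{\Lp(\Omega)}^p<2^{-kp}$, and set $g=\sum_{k=j+1}^\infty g_k$. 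Using that every $u_k\geq 0$, a short case analysis (the $\psi_j(x)<\infty$, $\psi_j(y)<\infty$ case is a dominated-convergence argument; the case where one value is $\infty$ forces $\int_\gamma g\,ds=\infty$ by Tonelli) shows that $g$ is an upper gradient of $\psi_j$. Minkowski's inequality gives $\|g\|_{\Lp(\Omega)}\leq\sum\|g_k\|_{\Lp(\Omega)}$, and monotone convergence gives $\|v_N\|_{\Lp(\Omega)}\nearrow\|\psi_j\|_{\Lp(\Omega)}$. Combining, $\|\psi_j\|_{\Np(\Omega)}\leq\liminf_{N\to\infty}\|v_N\|_{\Np(\Omega)}<2^{-j}$, which in particular forces $\psi_j<\infty$ a.e.\ and, after using that $\Cp$ is an outer measure and that Newtonian functions agreeing a.e.\ agree q.e., gives the claimed membership in $\Np(\Omega)$.

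The main obstacle is genuinely the last step: handling the interchange of limits carefully so that the pointwise-defined $\psi_j$ really coincides with the $\Np$-limit of the partial sums and inherits the seminorm bound. The two workable routes are (a) the direct upper-gradient-sum argument sketched above, or (b) invoking completeness of $\tNp(\Omega)$ for absolutely convergent series and then identifying the representative with the pointwise sum (which is permissible since Newtonian functions that agree a.e.\ agree q.e.). Either route concludes the proof.
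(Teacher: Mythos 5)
Your construction is exactly the paper's: choose $u_k\in\calA_{U_k}$ with $\|u_k\|_{\Np(\Omega)}<2^{-k}$ and set $\psi_j=\sum_{k=j+1}^\infty u_k$, the lower bound and monotonicity following from the nesting of the $U_k$ and nonnegativity of the $u_k$. The paper states this construction without further comment, so your additional verification of the norm bound for the infinite sum (subadditivity on partial sums, the summed upper gradient, Minkowski and monotone convergence) is a correct filling-in of details rather than a different route.
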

%-------------------------------------------------------------------------------
\begin{proof}
For each $k=1,2,\dots$\,, 
there exists a nonnegative function $u_k$ such that 
$u_k=1$ in $U_k\cap\Omega$ and 
$\|u_k\|_{\Np(\Omega)}<2^{-k}$ 
because $\bCp(U_k;\Omega)<2^{-kp}$. 
Letting 
\[
	\psi_j 
	= \sum_{k=j+1}^\infty u_k, 
	\quad j=1,2,\dots,
\]
yields a decreasing sequence of nonnegative functions 
such that $\|\psi_j\|_{\Np(\Omega)}<2^{-j}$ and  
$\psi_j\geq k-j$ in $U_k\cap\Omega$. 
Corollary~3.9 in Shanmugalingam~\cite{Shanmugalingam00} 
implies the existence of a subsequence of $\{\psi_j\}_{j=1}^\infty$ 
that converges to zero q.e.\ in $\Omega$, 
and since $\{\psi_j\}_{j=1}^\infty$ is nonnegative and decreasing, 
this shows that $\{\psi_j\}_{j=1}^\infty$ 
decreases to zero q.e.\ in $\Omega$.
\end{proof}
%===============================================================================
\begin{definition}\label{def:new-quasicont} 
Let $f$ be an extended real-valued function defined on $\clOmX$. 
We say that $f$ is \emph{$\bCpO$-quasicontinuous on} $\clOmX$ 
if for every $\eps>0$ 
there is a relatively open subset $U$ of $\clOmX$ 
with $\bCp(U;\Omega)<\eps$ 
such that the restriction of $f$ to $(\clOmX)\setm U$ 
is continuous and real-valued. 
\end{definition}
%===============================================================================
Since the $\bCpO$-capacity is smaller than the Sobolev capacity 
(which is used to define quasicontinuity), 
it follows that quasicontinuous functions are also 
$\bCpO$-quasicontinuous. 
%===============================================================================
\begin{proposition}\label{prop:Dp0-new-quasicont}
If $f\colon\clOmX\to\eR$ is a function such that 
$f=0$ q.e.\ on $\bdyOmX$ and 
$f|_\Omega\in\Dp_0(\Omega)$, 
then $f$ is $\bCpO$-quasicontinuous on $\clOmX$.
\end{proposition}
%-------------------------------------------------------------------------------
\begin{proof}
Extend $f$ to $X$ by letting $f$ be equal to zero outside $\clOm$ 
so that $f\in\Dp(X)$. 
Then $f\in\Nploc(X)$ by Proposition~4.14 in Bj\"orn--Bj\"orn~\cite{BjBj11book}, 
and hence Theorem~1.1 in Bj\"orn--Bj\"orn--Shanmugalingam~\cite{BjBjSh08} 
asserts that $f$ is quasicontinuous on $X$, 
and therefore $\bCpO$-quasicontinuous on $\clOmX$. 
\end{proof}
%===============================================================================

The following is the main result of this paper.
%===============================================================================
\begin{theorem} \label{thm:Dp-resolutive} 
Assume that $f\colon\clOm\to\eR$ is $\bCpO$-quasicontinuous on $\clOmX$ 
and such that $f|_\Omega\in\Dp(\Omega)$\textup{,} 
which in particular hold if $f\in\Dp(X)$. 
Then $f$ is resolutive with respect to $\Omega$ and $\Pp f=\Hp f$.
\end{theorem}
%===============================================================================
To see that \pp-parabolicity is needed in 
Theorem~\ref{thm:Dp-resolutive} if $\Omega$ is unbounded, 
let $n>p$ and let $\Omega=\R^n\setm\itoverline B$, 
where $B$ is the open unit ball centered at the origin.
Then $\Omega$ is \pp-hyperbolic. 
Furthermore, let 
\[
	f(x)=|x|^{\tfrac{p-n}{p-1}}, 
	\quad x\in\clOm.
\]
Then $f$ satisfies the hypothesis of Theorem~\ref{thm:Dp-resolutive}. 
Because $f\equiv 1$ on $\bdy B$ 
and the \pp-harmonic extension does not consider the point at infinity, 
it is clear that $\Hp f\equiv 1$. 
However, $\Pp f\equiv f$, 
since $f$ is in fact \pp-harmonic 
(it is easy to verify that $f$ 
is a solution of the \pp-Laplace equation 
\eqref{sec:intro-p-Laplace-eq}) 
and continuous on $\clOm$, 
and hence $f\in\U_f(\Omega)$ and $f\in\LL_f(\Omega)$, 
which implies that 
$f\leq\lPp f\leq\uPp f\leq f$.
%===============================================================================
\begin{proof}[Proof of Theorem~\ref{thm:Dp-resolutive}]
Suppose that $\Omega$ is unbounded and \pp-parabolic. 
Let $\{K_j\}_{j=1}^\infty$ 
be an increasing sequence of compact sets such that 
$K_1\Subset K_2\Subset\cdots\Subset\Omega=\bigcup_{j=1}^\infty K_j$ 
and let $x_0\in X$. 
For each $j=1,2,\dots$\,, 
we can find a function 
$u_j\in\Dp(\Omega)$ such that $\chi_{K_j}\leq u_j\leq 1$, 
$u_j=0$ in $\Omega\setm B_j$ for some ball $B_j\supset K_j$ 
centered at $x_0$, 
and 
\begin{equation} \label{thm:Dp-resolutive-g_u_j}
	\|g_{u_j}\|_{\Lp(\Omega)}<2^{-j}.
\end{equation}
Let 
\begin{equation} \label{thm:Dp-resolutive-xi_j}
	\xi_j = \sum_{k=j}^\infty(1-u_k),
	\quad j=1,2,\dots. 
\end{equation}
Then $\xi_j\geq 0$ and 
\begin{equation}\label{thm:Dp-resolutive-g_xi_j}
	\|g_{\xi_j}\|_{\Lp(\Omega)}
	\leq \sum_{k=j}^\infty\|g_{u_k}\|_{\Lp(\Omega)}
	< \sum_{k=j}^\infty 2^{-k} 
	= 2^{1-j}. 
\end{equation}
Let $\Omega_j=\bigcup_{n=1}^{\,j}B_n\cap\Omega$, $j=1,2,\dots$\,. 
Then 
$\Omega_1\subset\Omega_2\subset\cdots\subset\Omega=\bigcup_{j=1}^\infty\Omega_j$. 
Since 
$u_j=0$ in $\Omega\setm\Omega_j$, 
it is easy to see that
\begin{equation} \label{thm:Dp-resolutive-limit}
	\lim_{\Omega\ni y\to\infty}\xi_j(y) 
	= \infty 
	\quad\text{for all }j=1,2,\dots.
\end{equation}
Furthermore, 
since 
$\{\xi_j\}_{j=1}^\infty$ is decreasing 
and $\xi_j=0$ on $K_j$ 
for each $j=1,2,\dots$\,, 
it follows that 
$\{\xi_j\}_{j=1}^\infty$ 
decreases to zero in $\Omega$. 

On the other hand, if $\Omega$ is bounded, 
then we let $\xi_j\equiv 0$ in $\Omega$, $j=1,2,\dots$\,. 

The \pp-harmonic extension 
$\Hp f$ is 
$\bCpO$-quasicontinuous on $\clOmX$ 
(when we consider $\Hp f$ to be equal to $f$ on $\bdy\Omega$), 
since Proposition~\ref{prop:Dp0-new-quasicont} 
asserts that $\Hp f-f$ is $\bCpO$-quasicontinuous on $\clOmX$ as 
$(\Hp f-f)|_\Omega\in\Dp_0(\Omega)$. 
We can therefore find a decreasing sequence $\{U_k\}_{k=1}^\infty$ 
of relatively open subsets of $\clOmX$ 
with $\bCp(U_k;\Omega)<2^{-kp}$ and such that 
the restriction of $\Hp f$ to $(\clOmX)\setm U_k$ 
is continuous. 

Now we derive that 
$\uPp f\leq\Hp f$ q.e.\ in $\Omega$ 
if $f$ is bounded from below. 
Without loss of generality, we may as well assume that $f\geq 0$. 
Then the comparison principle (Lemma~4.7 in Hansevi~\cite{Hansevi15}) 
implies that $\Hp f\geq 0$ in $\Omega$. 

Consider the sequence of nonnegative functions 
$\{\psi_j\}_{j=1}^\infty$ 
given by Lemma~\ref{lem:5.3-BjBjSh03b}, 
and define $h_j\colon\Omega\to[0,\infty]$ by letting 
\[
	h_j 
	= \Hp f + \xi_j + \psi_j, 
	\quad j=1,2,\dots. 
\]
Then $h_j\in\Dp(\Omega)$ and 
$\{h_j\}_{j=1}^\infty$ 
decreases to $\Hp f$ q.e.\ in $\Omega$. 

Let $\phi_j$ 
be the lsc-regularized solution of the 
$\K_{h_j,h_j}$-obstacle problem, $j=1,2,\dots$\,. 
By \eqref{thm:Dp-resolutive-g_xi_j} and 
Lemma~\ref{lem:5.3-BjBjSh03b}, 
\[
	\|g_{h_j-\Hp f}\|_{\Lp(\Omega)} 
	\leq \|g_{\xi_j}\|_{\Lp(\Omega)}
		+ \|g_{\psi_j}\|_{\Lp(\Omega)}
	< 2^{1-j}
		+ 2^{-j}
	\to 0
	\quad\text{as }j\to\infty,
\]
and as $\Hp f$ is a solution of the 
$\K_{\Hp f,\Hp f}$-obstacle problem, 
it follows from Theorem~\ref{thm:obst-lim} that 
$\{\phi_{j}\}_{j=1}^\infty$ decreases to $\Hp f$ q.e.\ in $\Omega$. 
This will be used later in the proof.

Next we show that 
\begin{equation} \label{thm:Dp-resolutive-liminf}
	\liminf_{\Omega\ni y\to x}\phi_j(y) 
	\geq f(x)
	\quad\textup{for all }x\in\bdy\Omega.
\end{equation}
Fix a positive integer $m$ and let $\eps=1/m$. 
By Lemma~\ref{lem:5.3-BjBjSh03b}, 
\begin{equation} \label{thm:Dp-resolutive-ineq-1}
	h_j(y) 
	\geq \psi_j(y) 
	\geq m
	\quad\textup{for all }y\in U_{m+j}\cap\Omega.
\end{equation}
Let $x\in\bdyOmX$. 
If $x\notin U_{m+j}$, 
then as the restriction of $\Hp f$ to 
$(\clOmX)\setm U_{m+j}$ 
is continuous, 
there is a relative neighborhood 
$V_x\subset\clOmX$ of $x$ such that 
\begin{equation} \label{thm:Dp-resolutive-ineq-2}
	h_j(y) 
	\geq \Hp f(y)
	\geq \Hp f(x)-\eps 
	= f(x)-\eps
	\quad\textup{for all }y\in (V_x\cap\Omega)\setm U_{m+j}.
\end{equation}
By combining
\eqref{thm:Dp-resolutive-ineq-1} 
and 
\eqref{thm:Dp-resolutive-ineq-2}, 
we see that for $x\in(\bdyOmX)\setm U_{m+j}$, 
\begin{equation} \label{thm:Dp-resolutive-ineq-3}
	h_j(y) 
	\geq \min\{f(x)-\eps,m\}
	\quad\textup{for all }y\in V_x\cap\Omega. 
\end{equation}
On the other hand, 
if $x\in U_{m+j}$, 
then we let $V_x=U_{m+j}$, 
and see that \eqref{thm:Dp-resolutive-ineq-3} 
holds also in this case due to \eqref{thm:Dp-resolutive-ineq-1}.
Because $\phi_j\geq h_j$ q.e.\ in $\Omega$ 
and $\phi_j$ is lsc-regularized, 
it follows that 
\[
	\phi_j(y) 
	\geq \min\{f(x)-\eps,m\}
	\quad\textup{for all }y\in V_x\cap\Omega, 
\]
and hence 
\[ 
	\liminf_{\Omega\ni y\to x}\phi_j(y) 
	\geq \min\{f(x)-\eps,m\}.
\]
Letting $m\to\infty$ (and thus letting $\eps\to 0$) 
establishes that  
\[
	\liminf_{\Omega\ni y\to x}\phi_j(y) 
	\geq f(x)
	\quad\textup{for all }x\in\bdyOmX.
\]
Finally, if $\Omega$ is unbounded, 
then $\phi_j\geq h_j$ q.e.\ in $\Omega$ 
and $h_j\geq\xi_j$ everywhere in $\Omega$. 
From the lsc-regularity of $\phi_j$ and 
\eqref{thm:Dp-resolutive-limit}, 
it follows that 
\[
	\liminf_{\Omega\ni y\to\infty}\phi_j(y) 
	\geq 
	\lim_{\Omega\ni y\to\infty}\xi_j(y) 
	= 
	\infty,
\]
and hence we have shown that \eqref{thm:Dp-resolutive-liminf} holds. 

Since $\phi_j$ is an lsc-regularized superminimizer, 
Proposition~7.4 in Kinnunen--Martio~\cite{KiMa02} asserts that  
$\phi_j$ is superharmonic. 
As $\phi_j$ is bounded from below 
and \eqref{thm:Dp-resolutive-liminf} holds, 
it follows that $\phi_j\in\U_f(\Omega)$, 
and hence we know that $\uPp f\leq\phi_j$, 
$j=1,2,\dots$\,. 
Because $h_j\in\Dp(\Omega)$ and 
$\{h_j\}_{j=1}^\infty$ 
decreases to $\Hp f$ q.e.\ in $\Omega$, 
$\|g_{h_j-\Hp f}\|_{\Lp(\Omega)}\to 0$ as $j\to\infty$, 
and $\Hp f$ is a solution of the 
$\K_{\Hp f,\Hp f}$-obstacle problem, 
it follows from Theorem~\ref{thm:obst-lim} that 
$\{\phi_{j}\}_{j=1}^\infty$ decreases to $\Hp f$ q.e.\ in $\Omega$. 
We therefore conclude that $\uPp f\leq\Hp f$ q.e.\ in $\Omega$ 
(provided that $f$ is bounded from below).

Now we remove the extra assumption 
of $f$ being bounded from below, 
and let $f_k=\max\{f,-k\}$, $k=1,2,\dots$\,. 
Then $\{f_k\}_{k=1}^\infty$ is decreasing to $f$. 
Proposition~4.14 in Bj\"orn--Bj\"orn~\cite{BjBj11book} 
implies that $f\in\Lploc(\Omega)$. 
Hence $\mu(\{x\in\Omega:|f(x)|=\infty\})=0$, 
and therefore 
$\chi_{\{x\in\Omega\,:\,f(x)<-k\}}\to 0$ a.e.\ in $\Omega$ as $k\to\infty$.
Since 
\[
	g_{f_k-f} 
	= g_{\max\{0,-f-k\}}
	= g_f\chi_{\{x\in\Omega\,:\,f(x)<-k\}}
	\quad\textup{a.e.\ in }\Omega, 
\]
implies that 
$g_{f_k-f}\to 0$ a.e.\ in $\Omega$ as $k\to\infty$, 
and because $g_f\in\Lp(\Omega)$ and 
\[
	g_{f_k-f} 
	\leq g_{f_k}+g_f 
	\leq 2g_f
	\quad\textup{a.e.\ in }\Omega,
\] 
it follows by dominated convergence that  
$g_{f_k-f}\to 0$ in $\Lp(\Omega)$ as $k\to\infty$.
Thus Proposition~\ref{prop:Hf-lim} asserts that 
\[
	\Hp f_k\to\Hp f
	\quad\textup{in }\Omega\textup{ as }k\to\infty. 
\]
Since $f_k$ is bounded from below, 
it follows that  
\[
	\uPp f 
	= \lim_{k\to\infty} \uPp f_k
	\leq \lim_{k\to\infty}\Hp f_k 
	= \Hp f
	\quad\textup{q.e.\ in }\Omega.
\]
As both $\uPp f$ and $\Hp f$ are continuous, 
we conclude that $\uPp f\leq\Hp f$
everywhere in $\Omega$.
By Corollary~\ref{cor:comparison}, 
it follows that 
\[ 
	\uPp f 
	\leq \Hp f 
	= -\Hp(-f) 
	\leq -\uPp(-f) 
	= \lPp f 
	\leq \uPp f
	\quad\textup{in }\Omega,
\]
which implies that $f$ is resolutive and that $\Pp f=\Hp f$. 
\end{proof}
%===============================================================================

Perron solutions are invariant under perturbation of the function on a set of capacity zero.
%===============================================================================
\begin{theorem} \label{thm:Dp-invariance} 
Assume that $f\colon\clOm\to\eR$ is $\bCpO$-quasicontinuous on $\clOmX$ 
and such that 
$f|_\Omega\in\Dp(\Omega)$\textup{,} 
which in particular hold if $f\in\Dp(X)$. 
Assume also that $h\colon\bdy\Omega\to\eR$ is zero 
$\bCpO$-q.e.\ on $\bdyOmX$. 
Then $f+h$ is resolutive with respect to $\Omega$ and $\Pp(f+h)=\Pp f$.
\end{theorem}
%------------------------------------------------------------------------------
\begin{proof}
Extend $h$ by zero in $\Omega$ and let 
$E=\{x\in\clOm:h(x)\neq 0\}$. 
Since $\bCpO$ is an outer capacity, 
it follows that given $\eps>0$, 
we can find a relatively open subset $U$ of $\clOmX$ 
with $\bCp(U;\Omega)<\eps$ and such that $E\subset U$, 
and hence $h$ is $\bCpO$-quasicontinuous on $\clOmX$. 
The subadditivity of the 
$\bCpO$-capacity implies that this is true also for 
$f+h$. 

Since $f+h=f$ in $\Omega$ and $f|_\Omega\in\Dp(\Omega)$, 
we know that $\Hp(f+h)=\Hp f$. 
We complete the proof by applying Theorem~\ref{thm:Dp-resolutive} 
to both $f$ and $f+h$, 
which shows that $f+h$ is resolutive and that 
\[
	\Pp(f+h) 
	= \Hp(f+h) 
	= \Hp f
	= \Pp f.
	\qedhere
\]
\end{proof}
%===============================================================================

The following uniqueness result is a direct consequence 
of Theorem~\ref{thm:Dp-invariance}.
%===============================================================================
\begin{corollary} \label{cor:Dp-invariance}
Assume that $u$ is bounded and \pp-harmonic in $\Omega$. 
Assume also that $f\colon\clOm\to\eR$ is $\bCpO$-quasicontinuous on $\clOmX$ 
and such that $f|_\Omega\in\Dp(\Omega)$. 
Then $u=\Pp f$ in $\Omega$ 
whenever there exists a set 
$E\subset\bdy\Omega$ with $\bCp(E\setm\{\infty\};\Omega)=0$ 
such that 
\[
	\lim_{\Omega\ni y\to x}u(y)=f(x)
	\quad\text{for all }x\in\bdy\Omega\setm E.
\]
\end{corollary}
%------------------------------------------------------------------------------
\begin{proof}
Since $\bCp(E\setm\{\infty\};\Omega)=0$, 
Theorem~\ref{thm:Dp-invariance} applies to 
$f$ and $h:=\infty\chi_E$ (and clearly also to $f$ and $-h$), 
and because $u\in\U_{f-h}(\Omega)$ 
and $u\in\LL_{f+h}(\Omega)$ 
(since $u$ is bounded), 
it follows that 
\[
	u 
	\leq \lPp(f+h) 
	= \Pp(f+h)
	= \Pp f 
	= \Pp(f-h) 
	= \uPp(f-h) 
	\leq u
	\quad\textup{in }\Omega.
	\qedhere
\]
\end{proof}
%===============================================================================

The obtained resolutivity results can now be extended to continuous functions. 
Bj\"orn--Bj\"orn--Shanmugalingam~\cite{BjBjSh03b},\cite{BjBjSh13a} 
proved the following result for bounded domains. 
%===============================================================================
\begin{theorem} \label{thm:cont-resolutive}
If $f\in C(\bdy\Omega)$ and $h\colon\bdy\Omega\to\eR$ is zero 
$\bCpO$-q.e.\ on $\bdyOmX$\textup{,} 
then $f$ and $f+h$ are resolutive with respect to\/ $\Omega$ and 
$\Pp(f+h)=\Pp f$.
\end{theorem}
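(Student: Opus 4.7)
The plan is to reduce the theorem to Theorem~\ref{thm:Dp-resolutive} and Theorem~\ref{thm:Dp-invariance} by sandwiching $f$ between two continuous approximations that belong to $\Dp(\Omega)$. Since $\bdy\Omega$ is compact in the Aleksandrov one-point compactification $X^*$, the function $f$ is bounded on $\bdy\Omega$; set $c:=f(\infty)$ if $\Omega$ is unbounded. For each $\eps>0$ I would construct a continuous $\tilde f_\eps\colon\overline\Omega\to\R$ with $\tilde f_\eps|_\Omega\in\Dp(\Omega)$ and $\sup_{\bdy\Omega}|\tilde f_\eps-f|\leq\eps$. For bounded $\Omega$ this is a standard McShane extension of a Lipschitz approximation of $f$ on the compact metric space $\bdy\Omega$. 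For unbounded $\Omega$ I would pick balls $B_1\Subset B_2$ centered at some $x_0$ with $|f-c|\leq\eps/3$ on $\bdy\Omega\setm B_1$, a Lipschitz cutoff $\chi$ that is $1$ on $B_1$ and $0$ off $B_2$, and the McShane extension $g$ to $X$ of a Lipschitz $\eps/3$-approximation of $f$ on $\overline{B_2}\cap\bdy\Omega$. Setting $\tilde f_\eps:=c+\chi(g-c)$ makes $\tilde f_\eps-c$ Lipschitz with support in $\overline{B_2}$, so $\tilde f_\eps\in\Dp(\Omega)$, and a short case analysis on the three regions $B_1$, $B_2\setm B_1$, $X\setm B_2$ verifies $|\tilde f_\eps-f|\leq\eps$ on $\bdy\Omega$, with continuity at $\infty$ since $\tilde f_\eps\equiv c=f(\infty)$ off $B_2$.

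Let $\phi_\eps:=\tilde f_\eps-\eps$ and $\psi_\eps:=\tilde f_\eps+\eps$. Both are continuous on $\overline\Omega$ (so $\bCpO$-quasicontinuous on $\clOmX$), both lie in $\Dp(\Omega)$, and $\phi_\eps\leq f\leq\psi_\eps$ on $\bdy\Omega$. Theorem~\ref{thm:Dp-resolutive} then yields $\Pp\phi_\eps=\Hp\phi_\eps$ and $\Pp\psi_\eps=\Hp\psi_\eps$, while Theorem~\ref{thm:Dp-invariance} yields $\Pp(\phi_\eps+h)=\Pp\phi_\eps$ and $\Pp(\psi_\eps+h)=\Pp\psi_\eps$. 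Since $\phi_\eps+h\leq f+h\leq\psi_\eps+h$ on $\bdy\Omega$, Proposition~\ref{prop:comp-dual}\,\ref{item-comparison} gives
\[
\Pp\phi_\eps \;\leq\; \lPp(f+h) \;\leq\; \uPp(f+h) \;\leq\; \Pp\psi_\eps \quad\textup{in }\Omega,
\]
and analogously for $f$ itself. Because $\psi_\eps\equiv\phi_\eps+2\eps$ on $X$ and the \pp-harmonic extension is translation-invariant by constants (as constants have zero minimal \pp-weak upper gradient), Lemma~\ref{lem:comp-principle} gives $\Hp\psi_\eps\leq\Hp\phi_\eps+2\eps$, that is, $\Pp\psi_\eps-\Pp\phi_\eps\leq2\eps$. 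Letting $\eps\to0$ simultaneously forces $\uPp(f+h)=\lPp(f+h)$ and $\Pp(f+h)=\Pp f$, establishing both claims at once.

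The main obstacle is the construction of $\tilde f_\eps$ when $\Omega$ is unbounded: finite Dirichlet energy forces the approximation to be asymptotically constant, so one must exploit continuity of $f$ at the Aleksandrov point $\infty$ to subtract $c=f(\infty)$ and approximate only the part of $f-c$, which is already small outside a compact set. Once this compactly modified Lipschitz approximation is in hand, the remainder is a routine squeeze via the comparison principle that treats $f$ and $f+h$ in parallel.
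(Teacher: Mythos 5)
Your proposal is correct and follows essentially the same route as the paper: you approximate $f$ uniformly on $\bdy\Omega$ by functions that are Lipschitz on a compact piece and constant (equal to $f(\infty)$) outside a bounded set, so that the approximants lie in $\Dp(\Omega)$ and Theorems~\ref{thm:Dp-resolutive} and \ref{thm:Dp-invariance} apply, and then conclude by a squeeze using invariance of the solutions under addition of constants. The only cosmetic difference is that you phrase the squeeze via $\tilde f_\eps\pm\eps$ and the \pp-harmonic extension, whereas the paper estimates $\uPp f_j$ and $\lPp f_j$ directly from Definition~\ref{def:Perron}.
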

%------------------------------------------------------------------------------
\begin{proof}
We start by choosing a point $x_0\in\bdy\Omega$. 
If $\Omega$ is unbounded, 
then we let $x_0=\infty$. 
Let $\alpha=f(x_0)\in\R$ and 
let $j$ be a positive integer. 
Since $f\in C(\bdy\Omega)$, 
there exists a compact set $K_j\subset X$ such that 
$|f(x)-\alpha|<1/3j$ for all $x\in\bdy\Omega\setm K_j$. 
Let 
\[
	K'_j=\{x\in X:\dist(x,K_j)\leq 1\}.
\] 
We can find functions $\phi_j\in\Lipc(X)$ such that 
$|\phi_j-f|\leq 1/3j$ on $\bdy\Omega\cap K'_j$. 
Let $f_j=(\phi_j-\alpha)\eta_j+\alpha$, where 
\[
	\eta_j(x) 
	:= \begin{cases}
		1, & x\in K_j, \\
		1-\dist(x,K_j), & x\in K'_j\setm K_j, \\
		0, & x\in X\setm K'_j.
	\end{cases}
\]
Since $f_j$ is Lipschitz on $X$ and $f_j=\alpha$ outside $K'_j$, 
it follows that $f_j\in\Dp(X)$. 

Let $x\in\bdy\Omega$. 
Then $|f_j(x)-f(x)|\leq 1/3j$ whenever $x\notin K'_j\setm K_j$. 
Otherwise it follows that 
\begin{align*}
	|f_j(x)-f(x)| 
	&= |(\phi_j(x)-\alpha)\eta_j(x)+\alpha-f(x)| 
	\leq |\phi_j(x)-\alpha)|+|\alpha-f(x)| \\
	&\leq |\phi_j(x)-f(x)|+2|f(x)-\alpha| 
	< \frac{1}{j}, 
\end{align*}
and thus we know that $f-1/j \leq f_j\leq f+1/j$ on $\bdy\Omega$. 
It follows directly from Definition~\ref{def:Perron} that 
$\lPp f-1/j\leq \lPp f_j\leq\lPp f+1/j$, 
and we also get corresponding inequalities for 
$\uPp f_j$, $\lPp(f_j+h)$, and $\uPp(f_j+h)$. 

Theorem~\ref{thm:Dp-invariance} asserts that $f_j$ and $f_j+h$ 
are resolutive and that 
$\Pp(f_j+h)=\Pp f_j$. 
It follows that 
\begin{equation}\label{thm:cont-resolutive-ineq1}
	\uPp f-\frac{1}{j} 
	\leq \uPp f_j
	= \lPp f_j
	\leq \lPp f+\frac{1}{j}. 
\end{equation}
Applying Corollary~\ref{cor:comparison} to 
\eqref{thm:cont-resolutive-ineq1} yields 
$0\leq\uPp f-\lPp f\leq 2/j$. 
Letting $j\to\infty$ shows that $f$ is resolutive. 
Similarly, we can see that also $f+h$ is resolutive.

Finally, we have 
\begin{equation}\label{thm:cont-resolutive-ineq2}
	\Pp(f+h)-\Pp f
	= \uPp(f+h)-\lPp f 
	\leq \uPp(f_j+h)+\frac{1}{j}-\biggl(\lPp f_j-\frac{1}{j}\biggr) 
	= \frac{2}{j}. 
\end{equation}
Interchanging $\uPp(f+h)$ and $\lPp f$ with 
$\lPp(f+h)$ and $\uPp f$, respectively, 
in \eqref{thm:cont-resolutive-ineq2} 
yields $\Pp(f+h)-\Pp f\geq -2/j$, 
and hence 
$|\Pp(f+h)-\Pp f|<2/j$. 
Letting $j\to\infty$ shows that $\Pp(f+h)=\Pp f$.
\end{proof}
%===============================================================================

We conclude this paper with 
the following uniqueness result, 
corresponding to Corollary~\ref{cor:Dp-invariance}, 
that follows directly from Theorem~\ref{thm:cont-resolutive}. 
The proof is identical to the proof of 
Corollary~\ref{cor:Dp-invariance}, 
except for applying Theorem~\ref{thm:cont-resolutive} 
(instead of Theorem~\ref{thm:Dp-invariance}).
%===============================================================================
\begin{corollary} \label{cor:cont-resolutive-invariance}
Assume that $u$ is bounded and \pp-harmonic in $\Omega$. 
If $f\in C(\bdy\Omega)$ 
and there is a set 
$E\subset\bdy\Omega$ with $\bCp(E\setm\{\infty\};\Omega)=0$ such that 
\[
	\lim_{\Omega\ni y\to x}u(y)=f(x)
	\quad\text{for all }x\in\bdy\Omega\setm E, 
\]
then $u=\Pp f$ in $\Omega$.
\end{corollary}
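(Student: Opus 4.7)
The plan is to mimic the proof of Corollary~\ref{cor:Dp-invariance} essentially verbatim, with Theorem~\ref{thm:cont-resolutive} replacing Theorem~\ref{thm:Dp-invariance}. Indeed, the hypothesis ``$f|_\Omega\in\Dp(\Omega)$ and $f$ is $\bCpO$-quasicontinuous on $\clOmX$'' of Corollary~\ref{cor:Dp-invariance} is exactly what Theorem~\ref{thm:Dp-invariance} needs, and here it is replaced by the assumption $f\in C(\bdy\Omega)$, which is precisely the hypothesis of Theorem~\ref{thm:cont-resolutive}. So after that single substitution, the rest of the argument should go through unchanged.

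First I would introduce the test perturbation $h:=\infty\cdot\chi_E$ on $\bdy\Omega$ (with the usual convention that $\infty\cdot 0=0$). Because $\bCp(E\cap X;\Omega)=0$, the function $h$ vanishes $\bCpO$-quasieverywhere on $\bdy\Omega\cap X$, so Theorem~\ref{thm:cont-resolutive} applied to the pairs $(f,h)$ and $(f,-h)$ yields that $f$, $f+h$, and $f-h$ are all resolutive and
\begin{equation*}
\Pp(f+h)=\Pp f=\Pp(f-h).
\end{equation*}

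Next I would check that $u\in\U_{f-h}(\Omega)\cap\LL_{f+h}(\Omega)$. Since $u$ is \pp-harmonic it is simultaneously super- and subharmonic, and boundedness of $u$ gives the required two-sided bound. For the boundary behaviour, at each $x\in\bdy\Omega\setm E$ the hypothesis gives $\lim_{\Omega\ni y\to x}u(y)=f(x)=(f\pm h)(x)$, while at each $x\in E$ we have $(f-h)(x)=-\infty$ and $(f+h)(x)=+\infty$, so the $\liminf$ and $\limsup$ inequalities in the definition of $\U_{f-h}$ and $\LL_{f+h}$ hold trivially; the case $x=\infty$ (if $\Omega$ is unbounded) is covered identically, using the boundary limit if $\infty\notin E$ and the trivial bound if $\infty\in E$.

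Finally I would chain these facts together as in Corollary~\ref{cor:Dp-invariance}, writing
\begin{equation*}
u\leq\lPp(f+h)=\Pp(f+h)=\Pp f=\Pp(f-h)=\uPp(f-h)\leq u\quad\text{in }\Omega,
\end{equation*}
which forces $u=\Pp f$. There is no real obstacle: all the heavy lifting sits in Theorem~\ref{thm:cont-resolutive}, and the remaining bookkeeping about points in $E$ and the point at infinity is handled uniformly by the boundedness of $u$.
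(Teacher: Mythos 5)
Your proposal is correct and is essentially the paper's own argument: the paper proves this corollary by repeating the proof of Corollary~\ref{cor:Dp-invariance} verbatim with Theorem~\ref{thm:cont-resolutive} in place of Theorem~\ref{thm:Dp-invariance}, using $h=\infty\chi_E$, the membership $u\in\U_{f-h}(\Omega)\cap\LL_{f+h}(\Omega)$ from boundedness, and the same chain $u\leq\lPp(f+h)=\Pp(f+h)=\Pp f=\Pp(f-h)=\uPp(f-h)\leq u$. Your additional verification of the boundary behaviour at points of $E$ and at $\infty$ is a correct elaboration of what the paper leaves implicit.
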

%===============================================================================
% BIBLIOGRAPHY
%
% Format for Journal Reference
% \bibitem{RefJ}
%  \bibauthor{Author, A., SecondAuthor, B., \AND ThirdAuthor, C.} 
%  \bibjtitle{Title} 
%  \bibjournal{Journal Name} \bibvol{1} (Year), firstpage--lastpage.

% Format for books
% \bibitem{RefB}
%  \bibauthor{Author, A., SecondAuthor, B., \AND ThirdAuthor, C.}, 
%  \bibbtitle{Title} 
%  Publisher, City, Year.
%===============================================================================
\newcommand{\AND}{{\rm and }}
\newcommand{\bibauthor}[1]{\textsc{#1},}
\newcommand{\bibjtitle}[1]{\textrm{#1},}
\newcommand{\bibbtitle}[1]{\textit{#1},}
\newcommand{\bibjournal}[1]{\textit{#1}}
\newcommand{\bibvol}[1]{\textbf{#1}}
\newcommand{\bibdoi}[1]{%
	\relax
	%\href{http://dx.doi.org/#1}{\small\texttt{doi:#1}}
}
\newcommand{\bibarxiv}[1]{%
	\href{http://arxiv.org/abs/#1}{\texttt{arXiv:#1}.}
}

%  Change the behavior of the reference list, ref like 1. instead of [1].
%
\makeatletter
\def\@biblabel#1{#1.}
\makeatother

%  Remove the setting of headlines by thebibliography,
%  so that it remains author/title.
%  Also change the vertical spacing.
%
\makeatletter
\let\Thebibliography=\thebibliography
\renewcommand{\thebibliography}[1]{%
	\def\@mkboth##1##2{}\Thebibliography{#1}%
	\addcontentsline{toc}{section}{References}%
	% Deleting extra vertical space
	\setlength{\@topsep}{0pt}% Delete if extra space before list
	\setlength{\itemsep}{0pt}%
	\setlength{\parskip}{0pt plus 2pt}%
}
\makeatother


\begin{thebibliography}{99}
\bibitem{BjornA06a}
	\bibauthor{Bj\"orn, A.} 
	\bibjtitle{A weak Kellogg property for quasiminimizers} 
	\bibjournal{Comment. Math. Helv.} \bibvol{81} (2006), 809--825. 
	\bibdoi{10.4171/CMH/75}

\bibitem{BjornA15}
	\bibauthor{Bj\"orn, A.} 
	\bibjtitle{The Dirichlet problem for \pp-harmonic functions 
				on the topologist's comb} 
	\bibjournal{Math. Z.} \bibvol{279} (2015), 389--405.
	\bibdoi{10.1007/s00209-014-1373-8}

\bibitem{BjBj11book}
	\bibauthor{Bj\"orn, A. \AND Bj\"orn, J.} 
	\bibbtitle{Nonlinear Potential Theory on Metric Spaces} 
	{\rm EMS Tracts in Mathematics \bibvol{17}, 
	European Math. Soc., Z\"urich, 2011.}
	\bibdoi{10.4171/099}

\bibitem{BjBj12a}
	\bibauthor{Bj\"orn, A. \AND Bj\"orn, J.} 
	\bibjtitle{Obstacle and Dirichlet problems on arbitrary nonopen sets, 
		and fine topology} 
	\bibjournal{Rev. Mat. Iberoam.} \bibvol{31} (2015), 161--214.
	\bibdoi{10.4171/RMI/830}

\bibitem{BjBjPa10}
	\bibauthor{Bj\"orn A., Bj\"orn, J. \AND Parviainen, M.}
	\bibjtitle{Lebesgue points and the fundamental convergence theorem 
		for superharmonic functions on metric spaces}
	\bibjournal{Rev. Mat. Iberoam.} \bibvol{26} (2010), 147--174.
	\bibdoi{10.4171/RMI/598}

\bibitem{BjBjSh03a}
	\bibauthor{Bj\"orn, A., Bj\"orn, J. \AND Shanmugalingam, N.} 
	\bibjtitle{The Dirichlet problem for \pp-harmonic functions on metric spaces} 
	\bibjournal{J. Reine Angew. Math.} \bibvol{556} (2003), 173--203.
	\bibdoi{10.1515/crll.2003.020}

\bibitem{BjBjSh03b}
	\bibauthor{Bj\"orn, A., Bj\"orn, J. \AND Shanmugalingam, N.} 
	\bibjtitle{The Perron method for \pp-harmonic functions in metric spaces} 
	\bibjournal{J. Differential Equations} \bibvol{195} (2003), 398--429.
	\bibdoi{10.1016/S0022-0396(03)00188-8}

\bibitem{BjBjSh08}
	\bibauthor{Bj\"orn, A., Bj\"orn, J. \AND Shanmugalingam, N.} 
	\bibjtitle{Quasicontinuity of Newton--Sobolev functions and 
				density of Lipschitz functions on metric spaces}
	\bibjournal{Houston J. Math.} \bibvol{34} (2008), 1197--1211.

\bibitem{BjBjSh13a}
	\bibauthor{Bj\"orn, A., Bj\"orn, J. \AND Shanmugalingam, N.} 
	\bibjtitle{The Dirichlet problem for \pp-harmonic functions 
		with respect to the Mazurkiewicz boundary} 
	\bibjournal{J. Differential Equations} \bibvol{259} (2015), 3078--3114.
	\bibdoi{10.1016/j.jde.2015.04.014}

\bibitem{BjBjSj16}
	\bibauthor{Bj\"orn, A., Bj\"orn, J. \AND Sj\"odin, T.} 
	\bibjtitle{The Dirichlet problem for \pp-harmonic functions 
	with respect to arbitrary compactifications (2016, preprint)} 
	\bibarxiv{1604.08731}

\bibitem{Brelot39}
	\bibauthor{Brelot, M.} 
	\bibjtitle{Familles de Perron et probl\`eme de Dirichlet} 
	\bibjournal{Acta Litt. Sci. Szeged} \bibvol{9} (1939), 133--153. 

\bibitem{EsSh15}
	\bibauthor{Estep, D. \AND Shanmugalingam, N.} 
	\bibjtitle{Geometry of prime end boundary and the Dirichlet problem 
				for bounded domains in metric measure spaces} 
	\bibjournal{Potential Anal.} \bibvol{42} (2015), 335--363.
	\bibdoi{10.1007/s11118-014-9436-3}

\bibitem{Farnana10a}
	\bibauthor{Farnana, Z.}
	\bibjtitle{Continuous dependence on obstacles for 
		the double obstacle problem on metric spaces} 
	\bibjournal{Nonlinear Anal.} \bibvol{73} (2010), 2819--2830.
	\bibdoi{10.1016/j.na.2010.06.023}

\bibitem{GrLiMa86}
	\bibauthor{Granlund, S., Lindqvist, P. \AND Martio, O.} 
	\bibjtitle{Note on the PWB-method in the non-linear case} 
	\bibjournal{Pacific J. Math.} \bibvol{125} (1986), 381--395.

\bibitem{HaKo00}
	\bibauthor{Haj\l{}asz, P. \AND Koskela, P.} 
	\bibbtitle{Sobolev met Poincar\'e} 
	{\rm Mem. Amer. Math. Soc. \bibvol{145} (2000).}

\bibitem{Hansevi15}
	\bibauthor{Hansevi, D.} 
	\bibjtitle{The obstacle and Dirichlet problems 
			associated with \pp-harmonic functions 
			in unbounded sets in $\R^n$ and metric spaces} 
	\bibjournal{Ann. Acad. Sci. Fenn. Math.} \bibvol{40} (2015), 89--108. 
	\bibdoi{10.5186/aasfm.2015.4005}

\bibitem{Heinonen01}
	\bibauthor{Heinonen, J.}
	\bibbtitle{Lectures on Analysis on Metric Spaces} 
	{\rm Universitext, 
	Springer-Verlag, New York, 2001.}
	\bibdoi{10.1007/978-1-4613-0131-8}

\bibitem{HeKiMa06}
	\bibauthor{Heinonen, J., Kilpel\"ainen, T. \AND Martio, O.} 
	\bibbtitle{Nonlinear Potential Theory of Degenerate Elliptic Equations} 
	{\rm 2nd ed., 
	Dover, Mineola, NY, 2006.}

\bibitem{HeKo96}
	\bibauthor{Heinonen, J. \AND Koskela, P.} 
	\bibjtitle{From local to global in quasiconformal structures} 
	\bibjournal{Proc. Natl. Acad. Sci. USA} \bibvol{93} (1996), 554--556.
	\bibdoi{10.1073/pnas.93.2.554}

\bibitem{HeKo98}
	\bibauthor{Heinonen, J. \AND Koskela, P.} 
	\bibjtitle{Quasiconformal maps in metric spaces with controlled geometry} 
	\bibjournal{Acta Math.} \bibvol{181} (1998), 1--61.
	\bibdoi{10.1007/BF02392747}

\bibitem{HeKoShTy15}
	\bibauthor{Heinonen, J., Koskela, P., 
		Shanmugalingam, N. \AND Tyson, J. T.} 
	\bibbtitle{Sobolev Spaces on Metric Measure Spaces} 
	{\rm New Mathematical Monographs \bibvol{27}, 
	Cambridge University Press, Cambridge, 2015.}
	\bibdoi{10.1017/CBO9781316135914}
	
\bibitem{HoSh02}
	\bibauthor{Holopainen, I. \AND Shanmugalingam, N.} 
	\bibjtitle{Singular functions on metric measure spaces}
	\bibjournal{Collect. Math.} \bibvol{53} (2002), 313--332.

\bibitem{Kilpelainen89}
	\bibauthor{Kilpel\"ainen, T.}
	\bibjtitle{Potential theory for supersolutions 
		of degenerate elliptic equations} 
	\bibjournal{Indiana Univ. Math. J.} \bibvol{38} (1989), 253--275.
	\bibdoi{10.1512/iumj.1989.38.38013}

\bibitem{KiMa96}
	\bibauthor{Kinnunen, J. \AND Martio, O.} 
	\bibjtitle{The Sobolev capacity on metric spaces} 
	\bibjournal{Ann. Acad. Sci. Fenn. Math.} \bibvol{21} (1996), 367--382.

\bibitem{KiMa02}
	\bibauthor{Kinnunen, J. \AND Martio, O.} 
	\bibjtitle{Nonlinear potential theory on metric spaces} 
	\bibjournal{Illinois J. Math.} \bibvol{46} (2002), 857--883.

\bibitem{KiSh06}
	\bibauthor{Kinnunen, J. \AND Shanmugalingam, N.} 
	\bibjtitle{Polar sets on metric spaces} 
	\bibjournal{Trans. Amer. Math. Soc.} \bibvol{358} (2006), 11--37.

\bibitem{KoMac98}
	\bibauthor{Koskela, P. \AND MacManus, P.} 
	\bibjtitle{Quasiconformal mappings and Sobolev spaces} 
	\bibjournal{Studia Math.} \bibvol{131} (1998), 1--17.

\bibitem{MaZi97}
	\bibauthor{Mal\'y, J. \AND Ziemer, W. P.} 
	\bibbtitle{Fine Regularity of Solutions of 
		Elliptic Partial Differential Equations} 
	{\rm Mathematical Surveys and Monographs \bibvol{51}, 
	American Math. Soc., Providence, RI, 1997.}
	\bibdoi{10.1090/surv/051}

\bibitem{Perron23}
	\bibauthor{Perron, O.} 
	\bibjtitle{Eine neue Behandlung der ersten 
		Randwertaufgabe f\"ur {$\Delta u=0$}}
	\bibjournal{Math. Z.} \bibvol{18} (1923), 42--54.
	\bibdoi{10.1007/BF01192395}

\bibitem{Remak24}
	\bibauthor{Remak, R.}
	\bibjtitle{\"Uber potentialkonvexe Funktionen} 
	\bibjournal{Math. Z.} \bibvol{20} (1924), 126--130.
	\bibdoi{10.1007/BF01188075}

\bibitem{Rudin91}
	\bibauthor{Rudin, W.}
	\bibbtitle{Functional Analysis} 
	{\rm McGraw-Hill, New York, 1991.}

\bibitem{Shanmugalingam00}
	\bibauthor{Shanmugalingam, N.} 
	\bibjtitle{Newtonian spaces: an extension of Sobolev spaces to 
		metric measure spaces} 
	\bibjournal{Rev. Mat. Iberoam.} \bibvol{16} (2000), 243--279.
	\bibdoi{10.4171/RMI/275}

\bibitem{Shanmugalingam01}
	\bibauthor{Shanmugalingam, N.} 
	\bibjtitle{Harmonic functions on metric spaces} 
	\bibjournal{Illinois J. Math.} \bibvol{45} (2001), 1021--1050.

\bibitem{Shanmugalingam03}
	\bibauthor{Shanmugalingam, N.} 
	\bibjtitle{Some convergence results for 
			\pp-harmonic functions on metric measure spaces}
	\bibjournal{Proc. London Math. Soc.} \bibvol{87} (2003), 226--246.
	\bibdoi{10.1112/S0024611503014151}

\end{thebibliography}
\end{document}